\documentclass[a4paper,11pt]{amsart}
\usepackage{latexsym, amssymb,amsmath,amsthm}
\usepackage[all, knot]{xy}
\xyoption{arc}

\def \To{\longrightarrow}
\def \dim{\operatorname{dim}}

\def \C{\mathcal{C}}
\def \D{\Delta}
\def \d{\delta}
\def \dl{\d_{_L}}

\def \e{\varepsilon}

\def \S{\mathcal{S}}

\def \Z{\mathbb{Z}}

\numberwithin{equation}{section}

\newtheorem{theorem}{Theorem}[section]
\newtheorem{lemma}[theorem]{Lemma}
\newtheorem{proposition}[theorem]{Proposition}
\newtheorem{corollary}[theorem]{Corollary}
\newtheorem{definition}[theorem]{Definition}
\newtheorem{example}[theorem]{Example}
\newtheorem{remark}[theorem]{Remark}
\newtheorem{convention}[theorem]{Convention}

\begin{document}

\title[FROM PROJECTIVE REPRESENTATIONS TO QUASI-QUANTUM GROUPS]
{FROM PROJECTIVE REPRESENTATIONS \\ TO QUASI-QUANTUM GROUPS}
\author[H.-L. HUANG]{Hua-Lin Huang}
\address{School of Mathematics, Shandong University, Jinan 250100, P. R. China}
\email{hualin@sdu.edu.cn}
\date{}
\maketitle

\begin{abstract}
This is a contribution to the project of quiver approaches to
quasi-quantum groups initiated in \cite{qha1}. We classify Majid
bimodules over groups with 3-cocycles by virtue of projective
representations. This leads to a theoretic classification of graded
pointed Majid algebras over path coalgebras, or equivalently cofree
pointed coalgebras, and helps to provide a projective
representation-theoretic description of the gauge equivalence of
graded pointed Majid algebras. We apply this machinery to construct
some concrete examples and obtain a classification of
finite-dimensional graded pointed Majid algebras with the set of
group-likes equal to the cyclic group $\Z_2$ of order 2.

\vskip 5pt

\noindent{\bf Keywords} \ \ Majid algebra, projective representation, Hopf quiver \\
\noindent{\bf 2000 MR Subject Classification} \ \ 16W30, 16W35,
20C25

\end{abstract}

\section{Introduction}

Quasi-quantum groups, that is quasi-Hopf algebras, were introduced
by Drinfeld \cite{d} in accordance with his philosophy of quantum
groups. Since its appearance, the notion has been playing important
roles in various areas of mathematics and physics. Our goal is to
classify some interesting classes of quasi-quantum groups and their
representations. Since the classification problem of general
quasi-quantum groups is still very complicated, we focus on
elementary (i.e., finite-dimensional and the simple modules of the
underlying algebras are 1-dimensional) quasi-Hopf algebras and
pointed (i.e., the simple comodules of the underlying coalgebras are
1-dimensional) Majid algebras (=dual quasi-Hopf algebras). In this
situation we can take advantage of the well-developed representation
theory of algebras (see for instance \cite{ass}), especially the
quiver techniques.

A quiver setting for quasi-quantum groups is provided in
\cite{qha1}. It is shown that a systematic study of elementary
quasi-Hopf algebras or pointed Majid algebras, in particular the
classification problem and the associated representation theory, can
be carried out effectively in this framework by virtue of the quiver
techniques. Under our quiver setting the study of elementary
quasi-Hopf algebras can be included in that of pointed Majid
algebras, so we always work on the latter only. The present paper is
devoted to this quiver classification project of pointed Majid
algebras.

For an arbitrary pointed Majid algebra, one can always associate to
it a coradically graded version induced by its coradical filtration.
So in this paper, by graded we actually mean coradically graded.
According to a Gabriel type theorem for pointed Majid algebras
\cite{qha1}, any graded pointed Majid algebra can be embedded into a
graded Majid algebra structure on some unique Hopf quiver. Therefore
the first step of the classification project is to investigate the
graded Majid algebra structures on Hopf quivers. The aim of this
paper is to give an explicit classification of such Majid algebras.

Let $k$ be a field. For a Hopf quiver $Q,$ a graded Majid algebra
structure on the path coalgebra $kQ$ determines on its set of
vertices $Q_0$ a group structure $G$ with a 3-cocycle $\Phi$ and a
$(kG,\Phi)$-Majid bimodule structure on the space $kQ_1$ spaned by
the set of arrows. Conversely, given a group $G$ with a 3-cocycle
$\Phi$ and a $(kG,\Phi)$-Majid bimodule $M,$ we can associate to the
data a Hopf quiver $Q$ and a graded Majid algebra structure on the
path coalgebra $kQ.$ See \cite{qha1} for the precise description. So
for our purpose, it suffices to classify all $(kG,\Phi)$-Majid
bimodules for a general group $G$ and an arbitrary 3-cocycle $\Phi.$

The notion of Majid bimodules is certainly an analog of the familiar
one of Hopf bimodules. Nichols \cite{nichols} introduced the
definition of Hopf bimodules and initiated the classification over
finite abelian groups. In \cite{cr1, cr2}, Cibils and Rosso gave an
explicit classification of the Hopf bimodules over an arbitrary
group and applied the result to define Hopf quivers and provide the
complete classification of graded Hopf structures on path
(co)algebras. In more detail, for a group $G,$ the category of
$kG$-Hopf bimodules is equivalent to the product of the categories
of usual module categories $\prod_{C \in \C} kZ_C\mathrm{-mod},$
where $\C$ is the set of conjugacy classes and $Z_C$ is the
centralizer of one of the elements in the class $C \in \C;$ and via
the quantum shuffle product \cite{rosso1} on Hopf quivers determined
by these data of $kG$-Hopf bimodules, a complete list of graded
cofree pointed Hopf algebras is obtained. Here by ``cofree" we mean
the underlying coalgebras are cotensor coalgebras and therefore they
enjoy the universal mapping property (see \cite{nichols}).

Our main result is an extension of Cibils and Rosso's works to Majid
bimodules and Majid algebras. It turns out that the category of
$(kG,\Phi)$-Majid bimodules is equivalent to the product of
categories $\prod_{C \in \C} (kZ_C,\Tilde{\Phi}_C)\mathrm{-rep},$
where $\C, Z_C$ are as before and $\Tilde{\Phi}_C$ is a 2-cocycle on
$Z_C$ determined by $\Phi$ and $(kZ_C,\Tilde{\Phi}_C)\mathrm{-rep}$
is the category of projective $\Tilde{\Phi}_C$-representations in
the sense of Schur, or equivalently the left module category of the
twisted group algebra $k^{\Tilde{\Phi}_C}Z_C$ (see for
example\cite{karp}). As applications of this result, we obtain a
complete classification of graded Majid algebra structures on the
path coalgebras of Hopf quivers, which amounts to a classification
of graded cofree pointed Majid algebras, and a projective
representation-theoretic interpretation of their gauge equivalence.
We apply this machinery to construct some concrete examples of Majid
bimodules and Majid algebras, in which the group $G$ is the cyclic
group $\Z_2$ of order 2. A classification of finite-dimensional
graded pointed Majid algebras over $\Z_2$ is obtained by making use
of the Gabriel type theorem.

Recently, Etingof and Gelaki have a series of works
\cite{eg1,eg2,eg3,g} on the classification of elementary quasi-Hopf
algebras over the field of complex numbers. By the well-known
Tannakian formalism, their works lead to some interesting
classification results for finite tensor categories (see \cite{ce}).
Their classification method bases on very clever constructions ``by
hand", which is quite different from ours. We will show that our
approach can provide a conceptual understanding for their direct
constructions by the projective representation theory of groups.

The layout of the paper is as follows. In Section 2 we give the
definitions of Majid modules and Majid bimodules and include a
fundamental structure result for Majid modules. In Section 3 we
provide the explicit classification of Majid bimodules over groups
with 3-cocycles. In Section 4 we give, via the classification
result, a theoretic classification of graded Majid algebra
structures over path coalgebras and a description of their gauge
equivalence. In Section 5 we provide some concrete examples and
obtain a classification of finite-dimensional graded pointed Majid
algebras over $\Z_2.$

Throughout, we fix a ground field $k$ and vector spaces, linear
mappings, (co)algebras and unadorned tensor product $\otimes$ are
over $k.$ Unexplained notions about Hopf algebras, quasi-Hopf
algebras and Majid algebras can be found in the books \cite{kassel,
majid, ss}. We turn to \cite{qha1} frequently for notations and
results of the quiver setting of Majid algebras.

\section{Majid Modules and Fundamental Structure}

We recall the notion of Majid bimodules for the convenience of the
reader. For our purpose we also include the notion of Majid modules
and an analog of Larson and Sweedler's Fundamental Theorem for Hopf
modules \cite{ls}.

\begin{definition}
Assume that $H$ is a Majid algebra with reassociator $\Phi.$ A
linear space $M$ is called an $H$-Majid bimodule, if $M$ is an
$H$-bicomodule with structure maps $(\d_{_L},\d_{_R}),$ and there
are two $H$-bicomodule morphisms
$$\rho_{_L}: H \otimes M \To M, \ h \otimes m \mapsto h.m, \quad
\rho_{_R}: M \otimes H \To M, \ m \otimes h \mapsto m.h$$ such that
for all $g, h \in H, m \in M,$ the following equalities hold:
\begin{gather}
1_H.m=m=m.1_H,\\
g_1.(h_1.m_0)\Phi(g_2,h_2,m_1)=\Phi(g_1,h_1,m^{-1})(g_2h_2).m^0, \\
m_0.(g_1h_1)\Phi(m_1,g_2,h_2)=\Phi(m^{-1},g_1,h_1)(m^0.g_2).h_2, \\
g_1.(m_0.h_1)\Phi(g_2,m_1,h_2)=\Phi(g_1,m^{-1},h_1)(g_2.m^0).h_2,
\end{gather}
where we use the Sweedler notation $$\D(g)=g_1 \otimes g_2, \quad
\d_{_L}(m)=m^{-1} \otimes m^0, \quad \d_{_R}(m)=m_0 \otimes m_1$$
for coproduct and comodule structure maps.

Suppose only the $H$-bicomodule morphism $\rho_{_L}$ is defined for
the $H$-bicomodule $M$ satisfying $1_H.m=m$ and (2.2), we call it a
left $H$-Majid module. Similarly we define a right $H$-Majid module.
\end{definition}

For the purpose of the present paper, it is enough to focus on the
case of $H$ being a group algebra with 3-cocycle. This also allows
better exposition without much loss of generality.

From now on, we fix a group $G$ with unit element $\epsilon$ and a
3-cocycle $\Phi: G \times G \times G \To k^*.$ Recall that, $\Phi$
being a 3-cocycle means the following equalities hold for all
$e,f,g,h \in G$:
\begin{gather}
\Phi(e,f,g)\Phi(e,fg,h)\Phi(f,g,h)=\Phi(ef,g,h)\Phi(e,f,gh), \\
\Phi(\epsilon,e,f)=\Phi(e,\epsilon,f)=\Phi(e,f,\epsilon)=1.
\end{gather}
In the literature, such a cocycle is said to be normalized. All the
cocycles in this paper are assumed so. We extend $\Phi,$ without
changing the notation, by linearity to a function on $(kG)^{\otimes
3}$ and understand the group algebra $kG$ as a Majid algebra with
reassociator $\Phi.$

Let $M$ be a $(kG, \Phi)$-Majid bimodule. Then in the first place
the underlying bicomodule structure makes it a $G$-bigraded space
$M=\bigoplus_{g,h \in G} \ ^gM^h$ with $(g,h)$-isotypic component
$$^gM^h=\{ m \in M \ | \ \d_{_L}(m)=g \otimes m, \ \d_{_R}(m)=m
\otimes h \} \ .$$ The quasi-bimodule structure maps satisfy the
following equalities:
\begin{gather}
e.(f.m)=\frac{\Phi(e,f,g)}{\Phi(e,f,h)}(ef).m,\\
(m.e).f=\frac{\Phi(h,e,f)}{\Phi(g,e,f)}m.(ef),\\
(e.m).f=\frac{\Phi(e,h,f)}{\Phi(e,g,f)}e.(m.f),
\end{gather}
for all $e,f,g,h \in G$ and $m \in \ ^gM^h.$

For the classification of Majid bimodules, first we should study
Majid modules and develop an analog of the fundamental structure
theorem for Hopf modules. A right $(kG,\Phi)$-Majid module is a
$kG$-bicomodule $M$ with a right quasi-module satisfying (2.8). A
trivial example of right $(kG,\Phi)$-Majid module is $kG$ with
bicomodule defined by the usual diagonal map and right quasi-module
by the right multiplication. It is also easy to see that for any
left $kG$-comodule $(V,d_{_L})$ we can define a right
$(kG,\Phi)$-Majid module on the tensor space $V \otimes kG,$ with
structure maps given by
\begin{gather}
\d_{_L}(v \otimes g)=hg \otimes v \otimes g \ , \quad \d_{_R}(v
\otimes g)=v \otimes g \otimes g \ , \\
(v \otimes g).f=\Phi^{-1}(h,g,f)v \otimes gf \ ,
\end{gather}
for all $f,g,h \in G$ and $v \in \ ^hV=\{v \in V \ | \ d_{_L}(v)=h
\otimes v \}.$ For any right $kG$-comodule $V,$ a left
$(kG,\Phi)$-Majid module on $kG \otimes V$ can be defined
analogously. Majid modules of this form are said to be trivial.

The fundamental structure of Majid modules is described in the
following.

\begin{proposition}
The category $\mathcal{RM}(kG,\Phi)$ of right $(kG,\Phi)$-Majid
modules and the category $kG\mathrm{-comod}$ of left $kG$-comodules
are equivalent. In particular, right $(kG,\Phi)$-Majid modules are
trivial.
\end{proposition}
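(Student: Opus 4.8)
The plan is to produce a quasi-inverse to the functor $F\colon kG\mathrm{-comod}\To\mathcal{RM}(kG,\Phi)$ that sends a left $kG$-comodule $V$ to the trivial Majid module $V\otimes kG$ described in (2.12)--(2.13). In the reverse direction I would pass to coinvariants: to a right Majid module $M$ I assign the space $M^{\mathrm{co}}:=\{m\in M\mid\dr(m)=m\otimes\epsilon\}=\bigoplus_{g\in G}{}^{g}M^{\epsilon}$, made into a left $kG$-comodule by restricting $\dl$. This is well defined because $\dl$ sends each isotypic piece ${}^{g}M^{\epsilon}$ into $kg\otimes{}^{g}M^{\epsilon}\subseteq kG\otimes M^{\mathrm{co}}$, so $M^{\mathrm{co}}$ is a left subcomodule; this assignment is the functor $R\colon\mathcal{RM}(kG,\Phi)\To kG\mathrm{-comod}$. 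The equivalence then amounts to constructing natural isomorphisms $R\circ F\cong\mathrm{id}$ and $F\circ R\cong\mathrm{id}$, and the closing assertion that every right Majid module is trivial is immediate from $M\cong F(R(M))=M^{\mathrm{co}}\otimes kG$.

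The isomorphism $R\circ F\cong\mathrm{id}$ is a one-line check: from (2.12) one gets $(V\otimes kG)^{\mathrm{co}}=V\otimes\epsilon$, and $v\mapsto v\otimes\epsilon$ is an isomorphism of left $kG$-comodules by the normalization (2.7). The content lies in the other composite. The crucial preliminary observation is that, since $\rho_R$ is a $kG$-bicomodule morphism, the right action multiplies both gradings on the right by $f$: $m.f\in{}^{af}M^{bf}$ for every $m\in{}^{a}M^{b}$. I would then define $\Psi_M\colon M^{\mathrm{co}}\otimes kG\To M$ by $n\otimes g\mapsto n.g$ and verify it is a morphism in $\mathcal{RM}(kG,\Phi)$. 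Compatibility with $\dl$ and $\dr$ is immediate from the grading shift: for $n\in{}^{a}M^{\epsilon}$ one has $n.g\in{}^{ag}M^{g}$, matching the trivial structure maps (2.12) exactly. Compatibility with the right action reduces, via the quasi-module relation (2.8) together with $\Phi(\epsilon,-,-)=1$, to the identity $(n.g).f=\Phi^{-1}(a,g,f)\,n.(gf)$, which is precisely the trivial action (2.13). Bijectivity I would read off the $G\times G$-grading: $\Psi_M$ restricts to ${}^{ab^{-1}}M^{\epsilon}\otimes b\To{}^{a}M^{b}$, $n\otimes b\mapsto n.b$, and applying (2.8) gives $(m.b^{-1}).b=\frac{\Phi(b,b^{-1},b)}{\Phi(a,b^{-1},b)}m$ for $m\in{}^{a}M^{b}$, so right multiplication by $b$ and by $b^{-1}$ are mutually inverse up to a nonzero scalar; hence $\Psi_M$ is invertible in each degree. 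Naturality in $M$ is automatic, since $R(\phi)=\phi|_{M^{\mathrm{co}}}$ and any morphism $\phi$ commutes with the right action.

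The only real difficulty is bookkeeping rather than conceptual: I must pin down how $\rho_R$ shifts the two gradings and then match the cocycle scalar produced by (2.8) against the scalar $\Phi^{-1}$ that is built into the trivial action (2.13). No single step is deep, but the explicit inverse $m\mapsto\frac{\Phi(a,b^{-1},b)}{\Phi(b,b^{-1},b)}(m.b^{-1})\otimes b$ carries stray cocycle values that have to be cleared using (2.6) and the normalization (2.7); keeping these factors consistent across the action-compatibility and comodule-compatibility checks is where I expect to spend the most care.
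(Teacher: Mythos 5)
Your proposal is correct and follows essentially the same route as the paper: the quasi-inverse pair is the coinvariants functor $M\mapsto M^{\epsilon}$ and the trivial-module functor $V\mapsto V\otimes kG$, with the isomorphism $v\otimes g\mapsto v.g$ and its cocycle-corrected inverse $m\mapsto\frac{\Phi(a,b^{-1},b)}{\Phi(b,b^{-1},b)}\,m.b^{-1}\otimes b$ for $m\in{}^{a}M^{b}$, which is exactly the paper's map $\zeta$. The verification details you supply (the grading shift under $\rho_{_R}$, the scalar match via the quasi-module relation and normalization) are precisely what the paper dismisses as routine, so there is no substantive difference.
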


\begin{proof}
For each $M \in \mathcal{RM}(kG,\Phi),$ let $M^{\epsilon}=\{ m \in M
\ | \ \d_{_R}(m)=m \otimes \epsilon \}$ be the space of right
coinvariants. It is a sub left $kG$-comodule of $M.$ Define a
functor $\Theta: \mathcal{RM}(kG,\Phi) \To kG\mathrm{-comod}$ by $M
\mapsto M^{\epsilon}.$

For each $V \in kG\mathrm{-comod},$ we consider the right
$(kG,\Phi)$-Majid module structure on $V \otimes kG$ as given by
(2.10)-(2.11). Define a functor $\Xi: kG\mathrm{-comod} \To
\mathcal{RM}(kG,\Phi)$ by $V \mapsto V \otimes kG.$

The verification of these functors actually providing the claimed
equivalence of categories is routine. We only mention the
isomorphism maps of $M$ and $M^{\epsilon} \otimes kG$ in which the
3-cocycle $\Phi$ must get involved. Define $\zeta: \ M \To
M^{\epsilon} \otimes kG$ via $m \mapsto
\frac{\Phi(e,f^{-1},f)}{\Phi(f,f^{-1},f)}m.f^{-1} \otimes f$ for all
$m \in \ ^eM^f.$ For the converse, define $\xi:\ M^{\epsilon}
\otimes kG \To M$ via $ v \otimes g \mapsto v.g.$
\end{proof}

\begin{remark}
If we consider also the monoidal structures on the categories
$\mathcal{RM}(kG,\Phi)$ and $kG\mathrm{-comod},$ then the previous
equivalence is actually monoidal. The result can be generalized to
Majid modules over general Majid algebras with proper modification.
The quasi-Hopf version of the fundamental structure theorem of Hopf
modules is developed in \cite{hn}.
\end{remark}

\section{Majid Bimodules over Groups}

Denote the category of $(kG,\Phi)$-Majid bimodules by
$\mathcal{MB}(kG,\Phi).$  For each $M \in \mathcal{MB}(kG,\Phi),$
let $M=\bigoplus_{g,h \in G} \ ^gM^h$ be the decomposition of
isotypic components. According to the axioms of $(kG,\Phi)$-Majid
bimodules, we have for all $f,g,h \in G,$
\begin{equation}
f. \ ^gM^h \ =  \ ^{fg}M^{fh}, \quad  ^gM^h .f \ = \ ^{gf}M^{hf} \ .
\end{equation}
In particular it follows that for all $x,g,c \in G,$
\begin{equation}
^{g^{-1}cgx}M^x = \ ^{g^{-1}cg}M^\epsilon.x = \ ((g^{-1}. \
^cM^\epsilon).g).x \ .
\end{equation}
It is clear that
\begin{equation}
\dim_k{ ^{g^{-1}cgx}M^x}=\dim_k{^cM^\epsilon} \ .
\end{equation}

Now we are ready to classify Majid bimodules over groups. Firstly,
by Proposition 2.2, we have $M \cong M^{\epsilon} \otimes kG$ as
right $(kG,\Phi)$-Majid modules. The space $M^{\epsilon}$ of right
coinvariants is a sub left comodule, but not closed under the left
quasi-module action $\rho_{_L}.$ However, if we consider the
conjugate action, then by (3.2) we have $(g.v).g^{-1} \in
M^{\epsilon}$ for all $v \in M^{\epsilon}$ and $g \in G.$ For
convenience, we use the notation $g \triangleright v =
(g.v).g^{-1}.$ Of course, in general this action does not make a
usual $G$-representation. It turns out to be quite similar to
Schur's projective representation of $G.$ More precisely, for all
$e, f, g \in G, v \in \ ^gM^{\epsilon},$ we have
\begin{equation}
\epsilon \triangleright v =v, \quad  e \triangleright (f
\triangleright v) = \Tilde{\Phi}(e,f,g) (ef) \triangleright v,
\end{equation}
where
$\Tilde{\Phi}(e,f,g)=\frac{\Phi(e,f,g)\Phi(ef,f^{-1},e^{-1})\Phi(e,fg,f^{-1})}{\Phi(efg,f^{-1},e^{-1})\Phi(e,f,f^{-1})}
\ .$ For each $g,$ we choose for $^gM^\epsilon$ a basis $\{
v_g(\lambda)\}_{\lambda \in \Lambda_g},$ then construct and index a
basis $\{ v_g(\lambda)\}_{g \in G, \lambda \in \Lambda_g}$ for
$M^\epsilon$ accordingly. Let $X: G \To GL(M^\epsilon)$ be the
associated matrix representation of the action $\triangleright$
under this basis. Then we have for all $e, f \in G$
\begin{equation}
X(\epsilon)=\operatorname{Id}_{M^\epsilon}, \quad
X(e)X(f)=\chi(e,f)X(ef),
\end{equation}
where $\chi(e,f)$ is a diagonal matrix, with scalar block
$\Tilde{\Phi}(e,f,f^{-1}e^{-1}gef)\operatorname{Id}_{^gM^\epsilon}$
corresponding to the subset $\{ v_g(\lambda)\}_{\lambda \in
\Lambda_g}$ of basis elements of $M^\epsilon.$ Recall that a mapping
$\rho:G \To GL(V)$ is called a projective representation of $G$ if
there exists a mapping $\alpha: G \times G \To k^*$ such that
$\rho(\epsilon)=\operatorname{Id}_V$ and
$\rho(g)\rho(h)=\alpha(g,h)\rho(gh)$ for all $g,h \in G.$ In the
last equality $\alpha(g,h)$ is understood as a scalar matrix. The
representation is also called an $\alpha$-representation when we
need to specify the mapping. It is clear that $\alpha$ is a
2-cocycle on the group $G.$ If the 2-cocycle $\alpha$ is trivial,
that is $\alpha(g,h)=1$ for all $g,h \in G,$ then of course $\rho$
is a usual linear representation. In the just defined mapping $X: G
\To GL(M^\epsilon)$ the $\chi(e,f)$'s are diagonal matrices
consisting of different scalar blocks, thus in general $X$ is not a
projective representation. However, the scalar blocks vary regularly
via the function $\Tilde{\Phi}$ determined by the 3-cocycle $\Phi,$
so we might call $X$ a $\Phi$-twisted projective representation,
which seems to be a natural generalization of the projective
representation.

Here it is worthy to digress for a moment. Note that for
$M^{\epsilon}$ the compatibility of the conjugate action
$\triangleright$ and the left coaction becomes
\begin{equation}
\d_{_L}(f \triangleright v) = fgf^{-1} \otimes f \triangleright v
\end{equation}
for all $f, g \in G, v \in \ ^gM^{\epsilon}.$ The data
$(M^{\epsilon},\triangleright,\dl)$ satisfying (3.6) can be viewed
as the right candidate of crossed module in the sense of
\cite{yetter}, or the more commonly used term Yetter-Drinfeld
module, for Majid algebras. We call the triple
$(M^{\epsilon},\triangleright,\dl)$ a left twisted crossed
$(kG,\Phi)$-module and it is also how $M^{\epsilon}$ inherits the
left $(kG,\Phi)$-Majid module structure of $M.$

We proceed to further investigation of $M^{\epsilon}$ via the
twisted projective representation $X.$ Let $\C$ be the set of the
conjugacy classes of $G.$ For each $C \in \C,$ let
$^CM^{\epsilon}=\oplus_{g \in C} \ ^gM^{\epsilon}.$ Again by (3.2),
the action $\triangleright$ is closed for each $^CM^{\epsilon},$ so
we have the direct sum decomposition of $M$ into sub twisted
projective representations, and even sub twisted crossed modules:
\begin{equation}
M=\bigoplus_{C \in \C} \ ^CM^{\epsilon} \ .
\end{equation}
Moreover, for each $g \in G,$ let $Z_g$ be its centralizer, then we
can restrict the action $\triangleright$ to $Z_g$ on
$^gM^{\epsilon}.$ Let $X_g: Z_g \To GL({^gM^{\epsilon}})$ denote the
corresponding matrix representation. The following observation is a
key step to our classification result.

\begin{lemma}
$X_g$ is a projective representation of $Z_g$ for all $g \in G.$
\end{lemma}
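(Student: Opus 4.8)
The plan is to restrict the defining identity (3.5) of the $\Phi$-twisted projective representation $X$ to the centralizer $Z_g$ and to show that on the single isotypic component $^gM^{\epsilon}$ the block-diagonal matrix $\chi$ collapses to an honest scalar. First I would record that $^gM^{\epsilon}$ is stable under the conjugate action $\triangleright$ of $Z_g$: for $z \in Z_g$ and $v \in {}^gM^{\epsilon}$ the compatibility (3.6) gives $\dl(z \triangleright v) = zgz^{-1} \otimes z \triangleright v = g \otimes z \triangleright v$, so $z \triangleright v \in {}^gM^{\epsilon}$ and $X_g$ is a well-defined map $Z_g \To GL({}^gM^{\epsilon})$.

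Next I would invoke the relation $X(e)X(f)=\chi(e,f)X(ef)$ from (3.5), in which the scalar block of $\chi(e,f)$ on ${}^gM^{\epsilon}$ is $\Tilde{\Phi}(e,f,f^{-1}e^{-1}gef)\operatorname{Id}_{{}^gM^{\epsilon}}$. The crucial observation is the simplification of the third argument when $e,f \in Z_g$: since both commute with $g$, one has $e^{-1}ge=g$ and hence $f^{-1}e^{-1}gef=f^{-1}gf=g$. Therefore on ${}^gM^{\epsilon}$ the block is the single scalar $\Tilde{\Phi}(e,f,g)$, and restricting everything to $Z_g$ and to the component ${}^gM^{\epsilon}$ yields
\begin{equation*}
X_g(\epsilon)=\operatorname{Id}_{{}^gM^{\epsilon}}, \qquad X_g(e)X_g(f)=\Tilde{\Phi}(e,f,g)\,X_g(ef)
\end{equation*}
for all $e,f \in Z_g$. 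Setting $\Tilde{\Phi}_g(e,f):=\Tilde{\Phi}(e,f,g)$, this is exactly the defining property of a projective $\Tilde{\Phi}_g$-representation of $Z_g$.

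To finish I would confirm that $\Tilde{\Phi}_g$ is genuinely a $k^*$-valued 2-cocycle on $Z_g$. This is automatic from associativity: computing the product $X_g(e)X_g(f)X_g(h)$ in the two bracketings forces the normalized 2-cocycle identity on $\Tilde{\Phi}_g$; alternatively it can be extracted directly from the 3-cocycle condition (2.6) specialized via $f^{-1}e^{-1}gef=g$. The only point requiring real care is the conjugation bookkeeping in the third slot of $\Tilde{\Phi}$, namely verifying that the argument $f^{-1}e^{-1}gef$ reduces to $g$ precisely when both $e$ and $f$ lie in $Z_g$; this is where the passage to the centralizer does all the work and is the heart of the lemma, since for general $e,f \in G$ the argument moves off $g$ and the block scalar genuinely varies.
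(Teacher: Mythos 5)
Your proof is correct and takes essentially the same route as the paper: restrict the identity (3.5) to $Z_g$, where the third argument $f^{-1}e^{-1}gef$ collapses to $g$, and then observe that $\Tilde{\Phi}_g(e,f)=\Tilde{\Phi}(e,f,g)$ is a 2-cocycle on $Z_g$. You merely make explicit two points the paper leaves implicit, namely the stability of ${}^gM^{\epsilon}$ under the conjugate $Z_g$-action and the conjugation bookkeeping in the third slot, which only strengthens the exposition.
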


\begin{proof} By (3.5) we have $X_g(\epsilon)=\operatorname{Id}_{\
^gM^\epsilon}$ and $X_g(e)X_g(f)=\Tilde{\Phi}(e,f,g)X_g(ef)$ for all
$e,f \in Z_g.$ Denote by
\begin{equation}
\Tilde{\Phi}_g: Z_g \times Z_g \To k^*, \quad (e,f) \mapsto
\Tilde{\Phi}(e,f,g)
\end{equation}
the mapping induced by $\Tilde{\Phi}.$ Direct calculation indicates
that $\Tilde{\Phi}_g$ is a 2-cocycle on $Z_g,$ thus $X_g$ is a
projective representation of $Z_g.$
\end{proof}

Since $^CM^{\epsilon} = \oplus_{g \in C} \ ^gM^{\epsilon},$ so it is
of interest to view the twisted projective representation
$^CM^{\epsilon}$ of $G$ as the gluing of the set of the usual
projective representations $\{^gM^{\epsilon} \ | \ g \in C \}$ of
the centralizers $Z_g.$

For each $g \in G,$ let $C_g$ be the conjugacy class containing $g.$
Just as the classical group representation theory, we want to extend
the $\Tilde{\Phi}_g$-representation $^gM^\epsilon$ of $Z_g$ to the
twisted projective representation $^{C_g}M^{\epsilon}$ of $G$ by an
appropriate induction procedure. To this end, we consider a twisted
version of tensor product of $kG,$ viewed as a right
$Z_g$-representation, with the $\Tilde{\Phi}_g$-representation
$^gM^\epsilon$ of $Z_g,$ as the quotient space
$$\frac{kG \otimes_k \ ^gM^{\epsilon}}{<e \otimes f
\triangleright m - \Tilde{\Phi}(e,f,g) ef \otimes m \ | \ \forall \
e \in G, f \in Z_g, m \in \ ^gM^\epsilon>} \ .$$ We denote it by $kG
\hat{\otimes}_{kZ_g} \ ^gM^\epsilon.$ For each $e \otimes m \in kG
\otimes_k \ ^gM^\epsilon,$ let $e \hat{\otimes} m$ be its image in
$kG \hat{\otimes}_{kZ_g} \ ^gM^\epsilon.$ Define two linear mappings
as follows:
\begin{eqnarray}
d_{_L}: kG \hat{\otimes}_{kZ_g} \ ^gM^\epsilon &\To& kG \otimes kG
\hat{\otimes}_{kZ_g} \ ^gM^\epsilon, \\ e \hat{\otimes} m
&\mapsto& ege^{-1} \otimes e \hat{\otimes} m, \nonumber \\
m_{_L}: kG \otimes kG \hat{\otimes}_{kZ_g} \ ^gM^\epsilon &\To& kG
\hat{\otimes}_{kZ_g} \ ^gM^\epsilon, \\ f \otimes e \hat{\otimes} m
&\mapsto& \Tilde{\Phi}(f,e,g)fe \hat{\otimes} m . \nonumber
\end{eqnarray}
By direct verification one can show that

\begin{lemma}
The triple $(kG \hat{\otimes}_{kZ_g} \ ^gM^\epsilon, m_{_L},
d_{_L})$ makes a twisted crossed module for $(kG,\Phi)$ and we have
the following canonical isomorphism
\begin{equation}
^{C_g}M^{\epsilon} \cong kG \hat{\otimes}_{kZ_g} \ ^gM^{\epsilon} \
.
\end{equation}
\end{lemma}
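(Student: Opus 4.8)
The plan is to establish the isomorphism in Lemma~3.3 by producing explicit inverse linear maps between $^{C_g}M^{\epsilon}$ and $kG \hat{\otimes}_{kZ_g} \ ^gM^\epsilon$, and then verifying that they are compatible with the twisted crossed module structures. First I would fix a complete set of left coset representatives $\{x_i\}$ of $Z_g$ in $G$, chosen so that each $x_i g x_i^{-1}$ runs exactly once over the conjugacy class $C_g$ as $x_i$ ranges over the representatives. The right coinvariant space $^{C_g}M^{\epsilon} = \oplus_{c \in C_g} \ ^cM^\epsilon$ decomposes accordingly, and by (3.2)--(3.3) each component $^{x_i g x_i^{-1}}M^\epsilon$ is obtained from $^gM^\epsilon$ by the conjugate action of $x_i$, i.e.\ $x_i \triangleright (-)$ sends $^gM^\epsilon$ isomorphically onto $^{x_i g x_i^{-1}}M^\epsilon$.

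With this in hand, I would define the map $\Psi: kG \hat{\otimes}_{kZ_g} \ ^gM^\epsilon \To \ ^{C_g}M^{\epsilon}$ on representatives by $e \hat{\otimes} m \mapsto e \triangleright m$, which lands in $^{ege^{-1}}M^\epsilon$. The first thing to check is that $\Psi$ is well defined on the quotient, i.e.\ that it kills the relations $e \otimes f \triangleright m - \Tilde{\Phi}(e,f,g) ef \otimes m$ for $e \in G, f \in Z_g$; this is exactly the content of the cocycle law (3.4), since $e \triangleright (f \triangleright m) = \Tilde{\Phi}(e,f,g)(ef)\triangleright m$ when $m \in \ ^gM^\epsilon$. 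For the inverse, using the coset decomposition, any element of $^{C_g}M^\epsilon$ is a sum of pieces in the various $^{x_i g x_i^{-1}}M^\epsilon$, and I would map such a piece $x_i \triangleright m$ back to $x_i \hat{\otimes} m$; the coset relations guarantee this is independent of the choice of representative. Checking $\Psi$ and its inverse compose to the identity then reduces to the regularity of the conjugate action together with the relation defining the twisted tensor product.

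Finally I would verify that $\Psi$ intertwines the coactions and the module actions, i.e.\ that it is a morphism of twisted crossed modules. On the comodule side this is immediate: $d_{_L}$ from (3.9) sends $e \hat{\otimes} m \mapsto ege^{-1}\otimes e\hat{\otimes} m$, while on the target side $\d_{_L}(e \triangleright m) = ege^{-1} \otimes e \triangleright m$ by (3.6), so the left grading matches under $\Psi$. On the action side, I would compare $m_{_L}(f \otimes e \hat{\otimes} m) = \Tilde{\Phi}(f,e,g) fe \hat{\otimes} m$ against the conjugate action $f \triangleright (e \triangleright m)$ on the target; by (3.4) these agree, so $\Psi$ commutes with $\triangleright$. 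Together these show $\Psi$ is an isomorphism of twisted crossed modules, which simultaneously establishes both assertions of the lemma, namely that $kG \hat{\otimes}_{kZ_g} \ ^gM^\epsilon$ carries the claimed twisted crossed module structure and that (3.11) holds.

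I expect the main obstacle to be the well-definedness verifications rather than the structural compatibility: one must be careful that the quotient relation uses $f \in Z_g$ (so that $f \triangleright m$ remains in $^gM^\epsilon$ by (3.2)) while the outer factor $e$ ranges over all of $G$, and that the $\Tilde{\Phi}$-cocycle identity (3.4) is applied with its third argument correctly tracking the grading $g$ of $m$. Keeping the bookkeeping of which conjugate of $g$ each summand lives in, and confirming that the scalar factors $\Tilde{\Phi}(\cdot,\cdot,g)$ emitted by the relations exactly cancel against those produced by the action, is the delicate part; once the coset representatives are fixed the remaining computations are routine.
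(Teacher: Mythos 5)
Your proposal is correct. In fact the paper gives no proof of this lemma at all---the statement is prefaced only by ``By direct verification one can show that''---so your argument supplies exactly the verification left to the reader, and it does so efficiently: by constructing the linear isomorphism $\Psi(e \hat{\otimes} m) = e \triangleright m$ first and then transporting structure, your two intertwining computations prove both assertions at once, since the crossed-module axioms for $(kG \hat{\otimes}_{kZ_g} \ ^gM^\epsilon, m_{_L}, d_{_L})$ are inherited through $\Psi$ from $(^{C_g}M^{\epsilon}, \triangleright, \dl)$, which the paper has already exhibited as a sub twisted crossed module of $M^{\epsilon}$. All the key checks are in place: well-definedness of $\Psi$ on the quotient is precisely (3.4) with inner argument in $Z_g$, and bijectivity follows from your coset decomposition, using that $x_i \triangleright (-) : \ ^gM^\epsilon \To \ ^{x_i g x_i^{-1}}M^\epsilon$ is invertible because $x_i \triangleright (x_i^{-1} \triangleright w) = \Tilde{\Phi}(x_i, x_i^{-1}, x_i g x_i^{-1})\, w$ with $\Tilde{\Phi}$ valued in $k^*$. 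One point should be made explicit in a final write-up: the maps (3.9)--(3.10) are defined by formulas on representatives, so their descent to the quotient is itself part of what must be proved, not something available before the intertwining check. Your computation $\Psi\bigl(\Tilde{\Phi}(f,e,g)\, fe \hat{\otimes} m\bigr) = f \triangleright \Psi(e \hat{\otimes} m)$ does settle this, but only when combined with the bijectivity of $\Psi$: since the right-hand side depends only on the class of $e \otimes m$, the formula for $m_{_L}$ coincides with the well-defined map $\Psi^{-1} \circ (f \triangleright (-)) \circ \Psi$ and hence descends, and similarly for $d_{_L}$. Alternatively, descent of $m_{_L}$ can be checked directly from the identity $\Tilde{\Phi}(f,e,g)\Tilde{\Phi}(fe,f',g) = \Tilde{\Phi}(e,f',g)\Tilde{\Phi}(f,ef',g)$ for $e,f \in G$, $f' \in Z_g$, which is itself a consequence of (3.4) applied to $f \triangleright (e \triangleright (f' \triangleright m))$ in two ways. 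Either way, no genuine gap remains.
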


We are now in a position to state the main result. First we
abbreviate some notations. For each $C \in \C,$ let $Z_C$ denote the
centralizer of one element in $C,$ say $g(C),$ and $\Tilde{\Phi}_C$
the corresponding 2-cocycle $\Tilde{\Phi}_{_{g(C)}}$ on $\Z_C$ as
defined in (3.8), and $M_C$ the $\Tilde{\Phi}_C$-representation
$^{g(C)}M^\epsilon$ of $Z_C.$ Let
$(kZ_C,\Tilde{\Phi}_C)\mathrm{-rep}$ denote the category of
$\Tilde{\Phi}_C$-representations of $Z_C$ and $\prod_{C \in \C}
(kZ_C,\Tilde{\Phi}_C)\mathrm{-rep}$ their cartesian product. By
$k^{\Tilde{\Phi}_C}Z_C$ we denote the twisted group algebra and by
$k^{\Tilde{\Phi}_C}Z_C\mathrm{-mod}$ we denote its left module
category. Let $\prod_{C \in \C} k^{\Tilde{\Phi}_C}Z_C\mathrm{-mod}$
denote the cartesian product of categories.

\begin{theorem}
We have the following equivalence of categories:
$$\mathcal{MB}(kG,\Phi) \cong \prod_{C \in \C}
(kZ_C,\Tilde{\Phi}_C)\mathrm{-rep} \cong \prod_{C \in \C}
k^{\Tilde{\Phi}_C}Z_C\mathrm{-mod} \ .$$
\end{theorem}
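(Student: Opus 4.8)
The plan is to assemble the desired equivalence from the structural results already established, reducing everything to the local data on each conjugacy class. The second equivalence, $\prod_{C \in \C} (kZ_C,\Tilde{\Phi}_C)\mathrm{-rep} \cong \prod_{C \in \C} k^{\Tilde{\Phi}_C}Z_C\mathrm{-mod}$, is standard: a $\Tilde{\Phi}_C$-representation of $Z_C$ is by definition a module over the twisted group algebra $k^{\Tilde{\Phi}_C}Z_C$, and this identification is functorial, so the second factor is essentially a restatement. Thus the real content is the first equivalence, $\mathcal{MB}(kG,\Phi) \cong \prod_{C \in \C} (kZ_C,\Tilde{\Phi}_C)\mathrm{-rep}$.

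First I would construct a functor $\mathcal{F}: \mathcal{MB}(kG,\Phi) \To \prod_{C \in \C} (kZ_C,\Tilde{\Phi}_C)\mathrm{-rep}$. Given $M$, I send it to the tuple $(M_C)_{C \in \C}$ where $M_C = \ ^{g(C)}M^\epsilon$ is the chosen isotypic piece, equipped with the restricted conjugate action $\triangleright$ of $Z_C$. By Lemma 3.1 this action is a genuine $\Tilde{\Phi}_C$-projective representation of $Z_C$, so the target is well-defined on objects; on morphisms I restrict a Majid bimodule map to right coinvariants graded by $g(C)$, checking it intertwines $\triangleright$. For the quasi-inverse $\mathcal{G}$, I start from a tuple $(N_C)_{C \in \C}$ of projective representations and build a Majid bimodule by applying the induction construction of Lemma 3.2 componentwise: set $^{C}M^{\epsilon} := kG \hat{\otimes}_{kZ_{g(C)}} N_C$ with its twisted crossed module structure $(m_{_L}, d_{_L})$ from (3.10)–(3.11), form $M^\epsilon = \bigoplus_{C} \ ^C M^\epsilon$, and then recover the full Majid bimodule via $M = M^\epsilon \otimes kG$ using the trivialization of right Majid modules from Proposition 2.2 together with (3.2) to define the left action off the coinvariants.

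Next I would verify that $\mathcal{F}$ and $\mathcal{G}$ are mutually quasi-inverse. In one direction, starting from a Majid bimodule $M$, the decomposition (3.7) gives $M = \bigoplus_{C} \ ^C M^\epsilon$ as twisted crossed modules, and Lemma 3.2 supplies the canonical isomorphism $^{C_g}M^{\epsilon} \cong kG \hat{\otimes}_{kZ_g} \ ^gM^{\epsilon}$; combining these across all classes and reattaching the $kG$-factor via Proposition 2.2 reconstitutes $M$ up to natural isomorphism, so $\mathcal{G}\mathcal{F} \cong \mathrm{id}$. In the other direction, applying $\mathcal{F}$ to an induced module $kG \hat{\otimes}_{kZ_{g(C)}} N_C$ and extracting the $g(C)$-graded coinvariant piece returns $N_C$ with its original $\Tilde{\Phi}_C$-action, because the relations defining $\hat{\otimes}$ identify $\epsilon \hat{\otimes} N_C$ with $N_C$ equivariantly; this gives $\mathcal{F}\mathcal{G} \cong \mathrm{id}$. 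Both natural isomorphisms follow from the canonical maps already named in the cited results, so this step is largely bookkeeping once the functors are in place.

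The main obstacle I anticipate is not the abstract matching but the careful checking that $\mathcal{G}$ actually lands in $\mathcal{MB}(kG,\Phi)$, i.e. that the reconstructed $M = M^\epsilon \otimes kG$ satisfies all four Majid bimodule axioms (2.1)–(2.4) with the correct $\Phi$-twisted compatibilities (2.7)–(2.9). The delicate point is that the left action on $M$ must be defined from $\triangleright$ on $M^\epsilon$ by undoing the conjugation, i.e. reversing $g \triangleright v = (g.v).g^{-1}$, and one must confirm that the various occurrences of $\Tilde{\Phi}$ and $\Phi$ cooperate so that the mixed associativity constraint (2.9) relating left and right actions holds; here the precise form of $\Tilde{\Phi}$ in terms of $\Phi$, together with the 3-cocycle identity (2.5), is exactly what makes the computation close. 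This is where the bulk of the genuine verification sits, though it reduces to the cocycle manipulations already foreshadowed in the derivations of (3.4) and Lemma 3.1.
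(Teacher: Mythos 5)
Your proposal follows essentially the same route as the paper: the functor sending $M$ to $(M_C)_{C\in\C}$, the quasi-inverse built from the induced twisted crossed modules $kG\,\hat{\otimes}_{kZ_C}\,V_C$ tensored with $kG$ via Proposition 2.2, and the identification of the second equivalence as the standard twisted group algebra fact are exactly the paper's $\Theta$ and $\Xi$. The paper likewise compresses the remaining verification into a ``straightforward but tedious calculation,'' so your proposal is correct and matches it in both structure and level of detail (modulo a harmless misnumbering of (3.9)--(3.10) as (3.10)--(3.11)).
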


\begin{proof}
The latter equivalence is known (see \cite{karp}). We prove the
former one. Define a functor
$$\Theta: \mathcal{MB}(kG,\Phi) \To \prod_{C \in \C}
(kZ_C,\Tilde{\Phi}_C)\mathrm{-rep}$$ by $M \mapsto (M_C)_{C \in
\C}.$ For the converse, define a functor
$$\Xi: \prod_{C \in \C} (kZ_C,\Tilde{\Phi}_C)\mathrm{-rep} \To
\mathcal{MB}(kG,\Phi)$$ by $(V_C)_{C \in \C} \mapsto \bigoplus_{C
\in \C} kG \hat{\otimes}_{kZ_C} V_C \otimes kG.$ The
$(kG,\Phi)$-Majid bimodule structure on the latter is furnished by
(3.9)-(3.10) and (2.10)-(2.11). Straightforward but tedious
calculation shows that the functors $\Theta$ and $\Xi$ provide the
desired category equivalence.
\end{proof}

We should note that the collections of projective representations
which give rise to Majid bimodules are special, as the occurred
2-cocycles are uniformly determined by a 3-cocycle. For later
application, these collections are said to be admissible. When the
3-cocylcles need to be emphasized, those corresponding to
$(kG,\Phi)$-Majid bimodules are called $\Phi$-collections.

We remark that, by the previous category equivalence, it is easy to
derive some interesting consequences in the case of $G$ being a
finite group. By Maschke's theorem and its generalization, the
category $\mathcal{MB}(kG,\Phi)$ is semisimple if and only if the
character of the field $k$ does not divide the order of $G.$ When
$k$ is algebraically closed with characteristic 0, the set of simple
$(kG,\Phi)$-Majid bimodules is in one-to-one correspondence with the
set of simple $k^{\Tilde{\Phi}_C}Z_C$-modules, which in turn is in
one-to-one correspondence with the $\Tilde{\Phi}_C$-regular
conjugacy classes of $Z_C$ for all $C \in \C$ (see \cite{karp}). We
would also like to remark that the category $\mathcal{MB}(kG,\Phi)$
can be described by the module category of the twisted quantum
double $D^{\Phi}(G)$ introduced in \cite{dpr}. This is done by
generalizing the corresponding results for Hopf bimodules in
\cite{rosso1,sch1} to the Majid setting. The method adopted in this
paper is more elementary and straightforward, since the
representation theory of a group is much easier than that of the
complicated twisted quantum double.

\section{Quiver Majid Algebras and Gauge Equivalence}

As applications of our main result, in this section we give a
classification of the graded Majid algebras on the path coalgebras
of Hopf quivers and a description for the gauge equivalence.

Fix a group $G$ and a 3-cocycle $\Phi.$ Let $\S(g)=g^{-1},$
$\alpha(g)=1$ and $\beta(g)=1/\Phi(g,g^{-1},g)$ for all $g \in G.$
We understand $(kG,\Phi)$ as a Majid algebra with quasi-antipode
$(\S,\alpha,\beta).$ A ramification datum of the group $G$ is a
formal sum $R=\sum_{C \in \C} R_CC$ of conjugacy classes with
non-negative integer coefficients. Let $Q(G,R)$ denote the
corresponding Hopf quiver (see \cite{cr2}): the set of vertices is
$G,$ and for each $x \in G$ and $c \in C,$ there are $R_C$ arrows
going from $x$ to $cx.$ It is proved in \cite{qha1} that

\begin{proposition}
Let $G$ be a group, $\Phi$ a 3-cocycle, $R$ a ramification datum,
and $Q=Q(G,R)$ the associated Hopf quiver. Then the path coalgebra
$kQ$ admits a graded Majid algebra structure with $kQ_0 \cong
(kG,\Phi,\S,\alpha,\beta)$ as a sub Majid algebra if and only if
$kQ_1$ admits a $(kG,\Phi)$-Majid bimodule structure. Moreover, the
set of such graded Majid algebra structures on the path coalgebra
$kQ$ is in one-to-one correspondence with the set of
$(kG,\Phi)$-Majid bimodule structures on $kQ_1.$
\end{proposition}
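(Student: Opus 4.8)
The plan is to establish a bijection between graded Majid algebra structures on $kQ$ (restricting to the canonical one on $kQ_0$) and $(kG,\Phi)$-Majid bimodule structures on $kQ_1$, by analyzing how the coalgebra grading interacts with the Majid algebra axioms. The key structural input is that $kQ$ as a coalgebra is the path coalgebra, whose degree-0 part is $kG=kQ_0$ and whose degree-1 part $kQ_1$ is the arrow span; by construction of $Q=Q(G,R)$ the number of arrows encodes $R$, and the coalgebra structure already fixes the bicomodule structure on $kQ_1$ over $kQ_0$. A graded Majid algebra structure consists of a graded multiplication $kQ\otimes kQ\to kQ$ together with a reassociator; I would first observe that imposing $kQ_0\cong(kG,\Phi)$ as a sub Majid algebra pins down the reassociator to be $\Phi$ (extended by the grading) and the degree-$(0,0)$ part of the product.

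First I would unravel what a graded multiplication must do in low degrees. Since the product is graded and counital, the components $kQ_0\otimes kQ_1\to kQ_1$ and $kQ_1\otimes kQ_0\to kQ_1$ are forced to be $kG$-bicomodule morphisms (compatibility with the coalgebra structure is exactly the bicomodule morphism condition), giving the two maps $\rho_{_L},\rho_{_R}$ of Definition 2.1. The associativity constraint of a Majid algebra, twisted by the reassociator $\Phi$, when evaluated on the graded pieces $G\otimes G\otimes kQ_1$, $G\otimes kQ_1\otimes G$, and $kQ_1\otimes G\otimes G$, yields precisely the three axioms (2.2)--(2.4) for a $(kG,\Phi)$-Majid bimodule. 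Conversely, given a $(kG,\Phi)$-Majid bimodule structure on $kQ_1$, one extends it to all of $kQ$ via the quantum-shuffle-type product on the path coalgebra (as in \cite{qha1, rosso1}), and checks that the full Majid associativity holds in all degrees. This reduces the ``if and only if'' to matching the degree-$\leq 1$ data, and the ``moreover'' one-to-one correspondence follows once one shows the higher-degree multiplication is uniquely determined by the degree-1 data.

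The main obstacle is this last uniqueness-and-existence claim: that the Majid bimodule structure on $kQ_1$ determines \emph{and} admits a unique extension to a graded Majid algebra structure on the whole path coalgebra $kQ$. The existence direction requires constructing the graded product in all degrees and verifying the full (quasi-)associativity constraint $\Phi\cdot((ab)c)=(a(bc))\cdot\Phi$ against the nontrivial reassociator $\Phi$; this is where the 3-cocycle condition (2.5) is essential, since it is exactly what guarantees the associator-twisted pentagon-type compatibility of the iterated products. I would structure this via the cotensor coalgebra / cofree property of $kQ$: because $kQ$ is cofree on $kQ_1$ over $kQ_0$ (the universal mapping property mentioned in the introduction), a coalgebra map out of a tensor product is determined by its corestriction to $kQ_0\oplus kQ_1$, which forces uniqueness; existence then amounts to checking the corestricted data assemble into a legitimate Majid algebra, reducing to the bimodule axioms plus the cocycle identity for $\Phi$. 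Since this proposition is quoted from \cite{qha1}, I would invoke that reference for the detailed verification rather than reproduce the lengthy shuffle-product computation here.
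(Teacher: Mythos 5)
Your proposal is correct and matches the paper's treatment: the paper itself gives no proof of this proposition, quoting it directly from \cite{qha1}, and your degree-by-degree analysis (the degree-$(0,1)$ and $(1,0)$ product components being $kG$-bicomodule morphisms, quasi-associativity on low-degree pieces yielding the Majid bimodule axioms, uniqueness via the cotensor-coalgebra universal property, and existence via the quantum shuffle product requiring the 3-cocycle identity) is precisely the strategy of that reference, which you likewise invoke for the detailed verification. Nothing further is needed.
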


As a corollary of Theorem 3.3 and Proposition 4.1, we have

\begin{theorem}
Let $G$ be a group, $\Phi$ a 3-cocycle, $R=\sum_{C \in \C} R_CC$ a
ramification datum, and $Q=Q(G,R)$ the associated Hopf quiver. Then
the set of graded Majid algebra structures on the path coalgebra
$kQ$ with $kQ_0 \cong (kG,\Phi,\S,\alpha,\beta)$ as a sub Majid
algebra is in one-to-one correspondence with the set of collections
$(M_C)_{C \in \C}$ where $M_C$ is a $\Tilde{\Phi}_C$-representation
of $Z_C$ of dimension $R_C.$
\end{theorem}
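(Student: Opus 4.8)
The plan is to recognize Theorem~4.3 as a direct corollary obtained by composing two equivalences already established in the excerpt, so the proof is essentially a bookkeeping argument matching up the relevant numerical and structural data. First I would invoke Proposition~4.1, which tells us that graded Majid algebra structures on $kQ$ with $kQ_0 \cong (kG,\Phi,\S,\alpha,\beta)$ as a sub Majid algebra are in one-to-one correspondence with $(kG,\Phi)$-Majid bimodule structures on the arrow space $kQ_1$. This reduces the theorem to understanding which Majid bimodule structures can live on $kQ_1$ for the fixed Hopf quiver $Q=Q(G,R)$, which is a purely dimensional constraint coming from the ramification datum.

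The key step is to track how the ramification datum $R=\sum_{C \in \C} R_C C$ dictates the dimensions of the isotypic components of $kQ_1$. By the definition of the Hopf quiver $Q(G,R)$, for each $x \in G$ and each $c$ in a conjugacy class $C$ there are exactly $R_C$ arrows from $x$ to $cx$; these arrows span precisely the component $^{cx}(kQ_1)^x$, so $\dim_k\, {}^{cx}(kQ_1)^x = R_C$. Comparing with the representative $g(C)$ and using the dimension identity (3.3), namely $\dim_k\, {}^{g^{-1}cgx}M^x = \dim_k\, {}^c M^\epsilon$, I would conclude that a $(kG,\Phi)$-Majid bimodule structure on $kQ_1$ forces $\dim_k M_C = \dim_k\, {}^{g(C)}(kQ_1)^\epsilon = R_C$ for every $C \in \C$. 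Conversely, given a collection $(M_C)_{C \in \C}$ of $\Tilde{\Phi}_C$-representations of $Z_C$ with $\dim M_C = R_C$, the functor $\Xi$ from Theorem~3.3 produces a Majid bimodule whose underlying graded space has exactly the isotypic dimensions matching those of $kQ_1$, hence is isomorphic to $kQ_1$ as a bigraded space.

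The remaining task is to combine Theorem~3.3, which gives the equivalence $\mathcal{MB}(kG,\Phi) \cong \prod_{C \in \C} (kZ_C,\Tilde{\Phi}_C)\mathrm{-rep}$, with the dimension constraint above, restricting the equivalence to those objects supported on the fixed bigraded shape of $kQ_1$. Under this equivalence, a Majid bimodule structure on $kQ_1$ corresponds to a collection $(M_C)_{C \in \C}$ with $M_C$ a $\Tilde{\Phi}_C$-representation of $Z_C$, and the dimension bookkeeping pins down $\dim M_C = R_C$. Chaining the bijection of Proposition~4.1 with this restricted equivalence of Theorem~3.3 then yields the asserted one-to-one correspondence.

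The main obstacle I anticipate is purely organizational rather than conceptual: one must be careful that the correspondence is genuinely a bijection on \emph{isomorphism classes of structures} on the fixed space $kQ_1$, not merely on abstract Majid bimodules. In particular I would verify that two distinct Majid bimodule structures on the same underlying $kQ_1$ cannot be identified under $\Theta$ unless they yield isomorphic collections $(M_C)_{C \in \C}$, and that the functor $\Xi$ indeed lands on the prescribed bigraded space; this amounts to checking compatibility of the identifications (3.2) and (3.11) with the arrow-counting of the Hopf quiver. Once this matching of underlying spaces is confirmed, the result follows formally by composing the two bijections.
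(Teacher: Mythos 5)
Your proposal is correct and follows essentially the paper's own route: the paper states this theorem as an immediate corollary of Proposition~4.1 and Theorem~3.3 with no further argument, and your proof is exactly that composition, with the dimension bookkeeping ($\dim_k\, {}^{cx}(kQ_1)^x = R_C$ matched to $\dim M_C = R_C$ via (3.3)) spelled out explicitly. The care you take about structures on the fixed space $kQ_1$ versus isomorphism classes is a reasonable precision that the paper itself glosses over.
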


Note that we do not lose much of the generality by fixing the
quasi-antipode of $(kG,\Phi)$ as $(\S,\alpha,\beta)$ defined above,
since on the one hand the quasi-antipodes have nothing to do with
the Majid bimodules; on the other hand by \cite{d} an arbitrary
quasi-antipode is of the form $$(\mu \ast \S \ast \mu^{-1}, \quad
\mu \ast \alpha,\quad \beta \ast \mu^{-1})$$ where $\ast$ is the
convolution product and $\mu \in \operatorname{Hom}_k(kG,k)$ is a
convolution invertible function. In addition, cofree pointed
coalgebras can always be presented as path coalgebras (see
\cite{cr2}), so the preceding theorem amounts to a complete
classification of graded cofree pointed Majid algebras.

In the rest of this section, we study the gauge equivalence of
graded pointed Majid algebras. First we recall the definition. Let
$(H,\Phi,\S,\alpha,\beta)$ be a Majid algebra. A gauge
transformation on $H$ is a convolution-invertible 2-cochain $F: H
\otimes H \To k$ obeying $F(h,1_H)=F(1_H,h)=\e(h)$ for all $h \in
H.$ Given a gauge transformation $F$ on $H,$ there is a new Majid
algebra $H_F$ with new product, $\Phi, \alpha, \beta$ given by
\begin{gather}
g \cdot_{_F} h= F^{-1}(g_1,h_1)g_2 \cdot
h_2F(g_3,h_3), \\
\Phi_F(f,g,h)= \\ F^{-1}(g_1,h_1)F^{-1}(f_1,g_2 \cdot
h_2)\Phi(f_2,g_3,h_3)F(f_3 \cdot g_4,h_4)F(f_4,g_5),\nonumber \\
\alpha_F(h)=F(\S(h_1),h_3)\alpha(h_2), \quad
\beta_F(h)=F^{-1}(h_1,\S(h_3))\alpha(h_2)
\end{gather}
for all $f,g,h \in H.$ Two Majid algebras $H$ and $H'$ are said to
be gauge equivalent if there exists a gauge transformation $F$ on
$H$ such that $H_F$ and $H$ are isomorphic Majid algebras. The gauge
equivalence is an equivalence relation.

We introduce a gauge equivalence on Majid bimodules over a group $G$
and use it to describe the gauge equivalence of the associated Majid
algebras. If a 2-cochain $F: G \times G \To k^*$ is
convolution-invertible and satisfies $F(g,\epsilon)=F(\epsilon,g)=1$
for all $g \in G,$ we call it a gauge transformation on $G.$ Define
$\Phi_F: G \times G \times G \To k^*$ by
\begin{equation}
\Phi_F(e,f,g)=\frac{F(e,f)F(ef,g)}{F(e,fg)F(f,g)}\Phi(e,f,g).
\end{equation} Clearly $\Phi_F=\Phi  \partial F,$ where $\partial F$
is the differential of $F$ and the product is the convolution
product. So it is a 3-cocycle and cohomologous to $\Phi.$ Given a
$(kG,\Phi)$-Majid bimodule $M$ and a gauge transformation $F$ on
$G,$ we can build a $(kG,\Phi_F)$-Majid bimodule $M_F$ as follows.
The space and the $kG$-bicomodule are the same as $M,$ while the new
quasi-bimodule structure maps are given by
\begin{equation}
f ._{_F}m=\frac{F(f,h)}{F(f,g)}f.m, \quad m ._{_F} f
=\frac{F(h,f)}{F(g,f)}m.f
\end{equation} for all $f,g,h \in G$ and $m \in \ ^gM^h.$
It is easy to verify that $M_F$ is indeed a $(kG,\Phi_F)$-Majid
bimodule. We say that the $(kG,\Phi)$-Majid bimodule $M$ and the
$(kG,\Phi')$-Majid bimodule $N$ are gauge equivalent, if there
exists a gauge transformation $F$ on $G$ such that $\Phi'=\Phi_F$
and $M_F \cong N$ as $(kG,\Phi')$-Majid bimodules. If $F$ is a gauge
transformation on $G,$ then so is $F^{-1}$ and we have
\begin{equation} (M_F)_{F^{-1}} = M = (M_{F^{-1}})_F. \end{equation}
If $F'$ is another gauge transformation on $G,$ then so is the
product $FF'$ and we have \begin{equation} (M_F)_{F'}=M_{FF'}.
\end{equation} By (4.6)-(4.7), it is clear that the gauge equivalence is
an equivalence relation in the family of Majid bimodules over the
group $G$ with arbitrary 3-cocycles.

According to Theorem 3.3, the gauge equivalence of Majid bimodules
should be described by a suitable transformation of the family of
admissible collections of projective representations. Before giving
this transformation, we need to fix some notations. For each
conjugacy class $C$ of $G,$ fix an element $g(C) \in C.$ Then we
adopt all the notations as given before Theorem 3.3. Let $(M_C)_{C
\in \C}$ be a $\Phi$-collection. Suppose that $F$ is a gauge
transformation on $G.$ Let $\Phi_F$ be as (4.4). We build a
$\Phi_F$-collection $(M^F_C)_{C \in \C}$ by these data. The space of
$M^F_C$ is the same as $M_C$ and the action is given by
\begin{equation}
f \triangleright_{_F} m
=\frac{F(f,f^{-1})}{F(f,g(C))F(fg(C),f^{-1})} f \triangleright m
\end{equation}
for all $f \in Z_C$ and $m \in M_C.$ Let $\Phi'$ be another
3-cocycle on $G$ and $(N_C)_{C \in \C}$ a $\Phi'$-collection. The
two admissible collections $(M_C)_{C \in \C}$ and $(N_C)_{C \in \C}$
are said to be gauge equivalent, if there exists a gauge
transformation $F$ on $G$ such that $\Phi'=\Phi_F$ and $M^F_C \cong
N_C$ as $\Tilde{\Phi'}_C$-representations of $Z_C$ for all $C.$ It
is easy to verify that this gauge equivalence has properties like
(4.6)-(4.7) and therefore it is also an equivalence relation.

The equation (4.8) suggests that the gauge equivalence of admissible
collections can be described by the projective equivalence of
projective representations of groups. Now we need to recall some
more definitions and facts. Two projective representations $\rho_i:
G \To GL(V_i), \ \ i=1,2,$ are said to be projectively equivalent if
there exists a mapping $\mu: G \To k^*$ with $\mu(\epsilon)=1$ and a
vector space isomorphism $\phi: V_1 \To V_2$ such that
\begin{equation} \rho_2(g)=\mu(g)\phi\rho_1(g)\phi^{-1}
\end{equation} for all $g \in G.$ If $\mu(g)=1$ for all $g \in G,$
then $\rho_1$ and $\rho_2$ are said to be linearly equivalent. We
declare that the isomorphisms we have used so far are linear
equivalence by default. Let $\rho_i, \ \ i=1,2,$ be an
$\alpha_i$-representation. Then $\rho_1$ is projectively
(respectively, linearly) equivalent to $\rho_2$ implies that
$\alpha_1$ is cohomologous (respectively, equal) to $\alpha_2.$ On
the other hand, if $\rho_1$ is an $\alpha_1$ representation on the
space $V,$ then for any 2-cocycle $\alpha_2$ that is cohomologous to
$\alpha_1$ there exists an $\alpha_2$-representation $\rho_2$ on $V$
which is projectively equivalent to $\rho_1.$ In particular, if
$\alpha_1$ is a 2-coboundary, then $\rho_1$ is projectively
equivalent to a linear representation.

Let $F$ be a gauge transformation on $G,$ $(M_C)_{C \in \C}$ a
$\Phi$-collection, and $(M^F_C)_{C \in \C}$ the corresponding
$\Phi'$-collection under gauge transformation. For each $C \in \C,$
set a mapping $\mu^F_C: Z_C \To k^*$ by
\begin{equation}
\mu^F_C(f)=\frac{F(f,f^{-1})}{F(f,g(C))F(fg(C),f^{-1})}
\end{equation}
for all $f \in Z_C.$ Clearly, for all $C \in \C,$ $M_C$ and $M^F_C$
are projectively equivalent as projective representations of $Z_C.$
In particular, the mappings $\mu^F_C$ are determined by $F$ in a
uniform way. We say the two collections $(M_C)_{C \in \C}$ and
$(M^F_C)_{C \in \C}$ of projective representations are $F$-uniformly
projective equivalent. In general, when the gauge transformation is
not explicit, we simply say two collections are uniformly projective
equivalent. In this terminology, a $\Phi$-collection $(M_C)_{C \in
\C}$ is gauge equivalent to a $\Phi'$-collection $(N_C)_{C \in \C}$
if and only if there exists a gauge transformation $F$ on $G$ such
that $\Phi'=\Phi
\partial F$ and that the two collections $(M_C)_{C \in \C}$ and
$(N_C)_{C \in \C}$ are $F$-uniformly projective equivalent.

Now we are ready to describe the gauge equivalence of graded Majid
algebras. For a Majid bimodule $M$ over $G,$ let $(M_C)_{C \in \C}$
be the corresponding admissible collection. We have shown in
\cite{qha1} that one can associate to $M$ a unique Hopf quiver and
construct a unique graded Majid algebra, denoted by $H(M),$ on the
path coalgebra as in Proposition 4.1. Similar to Nichols' bialgebras
of type one \cite{nichols},  we define $B(M)$ to be the sub Majid
algebra of $H(M)$ generated in degrees 0 and 1, that is the smallest
sub Majid algebra of $H(M)$ containing $G$ and $M.$

\begin{theorem}
Let $M,N$ be Majid bimodules over $G.$ Then the following are
equivalent:
\begin{enumerate}
  \item $H(M)$ and $H(N)$ are gauge equivalent.
  \item $B(M)$ and $B(N)$ are gauge equivalent.
  \item $M$ and $N$ are gauge equivalent.
  \item $(M_C)_{C \in \C}$ and $(N_C)_{C \in \C}$ are gauge equivalent.
  \item $(M_C)_{C \in \C}$ and $(N_C)_{C \in \C}$ are uniformly projective
        equivalent.
\end{enumerate}
\end{theorem}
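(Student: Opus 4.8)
The plan is to separate the five conditions into the representation-theoretic cluster (3)--(5), which I handle via Theorem 3.3 together with the explicit formulas (4.5), (4.8) and (4.10), and the algebra-level pair (1), (2), which I tie to (3) through the coradical filtration. Concretely I will prove the cycle (1)$\Rightarrow$(2)$\Rightarrow$(3)$\Rightarrow$(1) and, independently, the equivalences (3)$\Leftrightarrow$(4)$\Leftrightarrow$(5); all five statements are then equivalent. Throughout I work over the fixed group $G$, so that every Majid isomorphism in sight is taken to be the identity on the group-likes $Q_0=G$, in accordance with the definition of gauge equivalence of bimodules, where the isomorphism $M_F\cong N$ respects the $G$-bigrading.

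For (3)$\Leftrightarrow$(4) I apply the functor $\Theta$ of Theorem 3.3. The single substantive point is the identity $(M_F)_C=(M_C)^F$: expanding $f\triangleright_{_F}m=(f._{_F}m)._{_F}f^{-1}$ by means of the twisted actions (4.5) and collecting the $\Phi$-factors through the $3$-cocycle condition (2.5) reduces the resulting scalar to exactly the coefficient in (4.8). Since $\Theta$ is an equivalence and $\Phi'=\Phi_F$ in both definitions, a gauge equivalence of $M$ and $N$ is transported to one of $(M_C)_{C\in\C}$ and $(N_C)_{C\in\C}$, and conversely. For (4)$\Leftrightarrow$(5) I compare (4.8) with (4.10): the former exhibits $M^F_C$ as $M_C$ rescaled by the scalar function $\mu^F_C$ of (4.10), so a linear isomorphism $M^F_C\cong N_C$ of $\Tilde{\Phi'}_C$-representations is exactly a projective equivalence of $M_C$ and $N_C$ through $\mu^F_C$, i.e. $F$-uniform projective equivalence. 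Both equivalences are thus definitional once the identity $(M_F)_C=(M_C)^F$ is in hand.

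For (3)$\Rightarrow$(1) I promote a gauge transformation $F$ on $G$ to the gauge transformation $\hat F=F\circ(\pi_0\otimes\pi_0)$ on $H(M)$, where $\pi_0:kQ\To kQ_0=kG$ is the graded projection onto the coradical; since $\pi_0$ is a coalgebra map, $\hat F$ is convolution-invertible and normalized. A direct check (using $\pi_0 m=0$ for $m$ of positive degree) shows that the deformed product (4.1) of $H(M)_{\hat F}$ reproduces the action (4.5) in degree $1$ and the reassociator (4.2) reproduces (4.4) in degree $0$; comparing with the construction of Proposition 4.1 identifies $H(M)_{\hat F}$ with the graded Majid algebra attached to $M_F$. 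As $\Phi'=\Phi_F$ and $M_F\cong N$, functoriality of $M\mapsto H(M)$ yields $H(M)_{\hat F}\cong H(N)$, which is (1). For (1)$\Rightarrow$(2) I use that a gauge transformation leaves the coalgebra, hence the coradical filtration, untouched: $B(M)$, being the sub Majid algebra generated in degrees $0$ and $1$, remains a sub Majid algebra of $H(M)_F$, and any Majid isomorphism $H(M)_F\cong H(N)$ preserves this intrinsic generation and restricts to a gauge equivalence $B(M)\sim B(N)$.

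The crux is the reflection (2)$\Rightarrow$(3). Given $B(M)_{\mathcal F}\cong B(N)$ through a gauge transformation $\mathcal F$ on $B(M)$ and a Majid isomorphism $\phi$ fixing $G$, restricting $\mathcal F$ to $kG\otimes kG$ produces a gauge transformation $F$ on $G$ (normalization and convolution-invertibility descend because $kG$ is a subcoalgebra), and evaluating (4.2) on group-likes recovers (4.4), so $\Phi_F=\Phi'$. The obstacle is that $\mathcal F$ also carries components pairing degree $0$ with degrees $\geq 1$; the computation above shows precisely that such mixed components would add extra terms to the degree-$1$ product of $B(M)_{\mathcal F}$, destroying the clean form (4.5) and preventing one from reading off the bimodule action. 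I expect to remove them by a filtration argument: the quotient $\mathcal F\ast\hat F^{-1}$, with $\hat F=F\circ(\pi_0\otimes\pi_0)$, is a gauge transformation that is the counit on the coradical, and such transformations are realized by filtered Majid automorphisms of $B(M)$, whence $B(M)_{\mathcal F}\cong B(M)_{F}$ over $G$. Once $\mathcal F$ is reduced to the genuine group cochain $F$, the degree-$1$ product in $B(M)_F$ is exactly (4.5), so the degree-$1$ part of $\phi$ is a $(kG,\Phi')$-Majid bimodule isomorphism $M_F\cong N$, which is (3). Establishing that higher-degree gauge freedom is inner in this precise sense is the step demanding the most care.
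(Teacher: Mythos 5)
Your treatment of $(3)\Leftrightarrow(4)\Leftrightarrow(5)$ and of $(3)\Rightarrow(1)$ matches the paper, which dismisses the first two as definitional and proves $(3)\Rightarrow(1)$ by the same lift of $F$ to a degree-zero-concentrated gauge transformation of $H(M)$, giving $H(M)_F=H(M_F)\cong H(N)$; your $(1)\Rightarrow(2)$ is also sound. The genuine gap is exactly where you place it: $(2)\Rightarrow(3)$ is never proved, and the lemma you propose to close it with --- that a gauge transformation $E=\mathcal{F}\ast\hat F^{-1}$ which restricts to the counit on the coradical is ``realized by filtered Majid automorphisms,'' so that $B(M)_E\cong B(M)$ --- is false. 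Any Majid algebra isomorphism $\psi$ is in particular a coalgebra map, so $\varepsilon\circ\psi=\varepsilon$, and compatibility with reassociators forces $\Phi_{B(M)_E}=\Phi_{B(M)}\circ(\psi\otimes\psi\otimes\psi)$; but twisting by $E$ changes the reassociator according to (4.2), and triviality of $E$ on the coradical does not make this change disappear. Concretely, take $G=\Z_2$, $\Phi$ trivial, $Q=\Gamma^1$ with any Hopf bimodule structure on $kQ_1$, and $E=\varepsilon\otimes\varepsilon+N$, where $N(X,X)=1$ and $N$ vanishes on every other pair of basis paths. Then $E$ is counital, convolution invertible, and equal to $\varepsilon\otimes\varepsilon$ on $kG\otimes kG$, yet formula (4.2) gives $\Phi_E(g,X,X)=-1$ (the only surviving term is $E^{-1}(X,X)$, all other factors being $1$), whereas the trivial reassociator composed with $\psi\otimes\psi\otimes\psi$ vanishes on $(g,X,X)$ because $\varepsilon(X)=0$. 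Hence $H(M)_E\not\cong H(M)$ under any Majid isomorphism, filtered or not: higher-degree gauge freedom is not inner, and the essential link of your cycle $(1)\Rightarrow(2)\Rightarrow(3)\Rightarrow(1)$ breaks.

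The paper closes this direction differently, and the difference is the point: instead of trying to absorb the mixed components of the gauge transformation unconditionally, it uses the hypothesis to show they are absent. In its proof of $(1)\Rightarrow(3)$ it argues that $H(M)_F$, being isomorphic to the coradically graded $H(N)$, is graded, and concludes via the product formula (4.1) that $F$ concentrates on the degree-zero part; after that, restricting (4.2) to group-likes yields (4.4), hence $\Phi'=\Phi_F$, and the degree-one component of a normalized, filtration-preserving isomorphism is a $(kG,\Phi')$-Majid bimodule isomorphism $M_F\cong N$ because $H(M)_F=H(M_F)$. The same scheme, run inside $B(M)$, is what the paper means by ``$(2)\Leftrightarrow(3)$ is similar.'' Your reduction never invokes the constraint $B(M)_{\mathcal{F}}\cong B(N)$ when attempting to kill the mixed components of $\mathcal{F}$, and the counterexample above shows that without this constraint they cannot be killed; so the missing step must be repaired along the paper's lines, not by an unconditional innerness lemma.
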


\begin{proof} $(3) \Leftrightarrow (4)$ and $(4) \Leftrightarrow (5)$ are
direct from definitions. $(1) \Leftrightarrow (3)$ and $(2)
\Leftrightarrow (3)$ are similar, so we prove only $(1)
\Leftrightarrow (3).$

First suppose that $M$ is a $(kG,\Phi)$-Majid bimodule and $N$ is a
$(kG,\Phi')$-Majid bimodule and that they are gauge equivalent. Then
there exists a gauge transformation $F$ on $G$ such that
$\Phi'=\Phi_F$ and $M_F \cong N$ as $(kG,\Phi')$-Majid bimodules.
Clearly $F$ can be lifted as a gauge transformation on $H(M)$ which
concentrates at degree zero. By the defintion of $H(M)_F,$ one sees
that it is exactly the graded Majid algebra corresponding to the
Majid bimodule $M_F.$ Now we have $H(M)_F = H(M_F) \cong H(N)$ by
Proposition 4.1.

Conversely, suppose that $H(M)$ and $H(N)$ are gauge equivalent.
Then there exists a gauge transformation $F$ on $H(M)$ such that
$H(M)_F$ is isomorphic to $H(N).$ Since $H(M)_F$ is graded, then by
(4.1) we have that $F$ concentrates at degree zero part. When we
restrict $F$ to degree zero part, the equation (4.2) becomes (4.4),
so it is actually a gauge transformation on $G.$ The isomorphism map
preserves the coradical filtration. In particular its restriction to
the coradicals, i.e. the degree zero parts, is an isomorphism.
Without loss of generality, we may assume the map preserves
gradation and the restriction on degree zero is identity. Assume
that $M$ is a $(kG,\Phi)$-Majid bimodule and $N$ and a
$(kG,\Phi')$-Majid bimodule. Now by (4.2) we have $\Phi'=\Phi_F.$
The isomorphism $H(M)_F \cong H(N)$ also induces the isomorphism of
degree one parts as $(kG,\Phi')$-Majid bimodules. Since $H(M)_F =
H(M_F),$ we have $M_F \cong N.$
\end{proof}

The previous result can be generalized to the gauge equivalence for
more general graded pointed Majid algebras. For example, The
equivalent conditions (3)-(5) are always necessary for two graded
pointed Majid algebras to be gauge equivalent. Now it is interesting
to remark that, the isomorphism is to the gauge equivalence for
graded pointed Majid algebras is almost as much as the linear
equivalence is to the projective equivalence for projective
representations of groups.

We conclude this section with a corollary of the previous theorem,
which deals with the special case when a graded pointed Majid
algebra is gauge equivalent to a usual Hopf algebra.

\begin{corollary}
Suppose that $M$ is a $(kG,\Phi)$-Majid bimodule. Let $(M_C)_{C \in
\C},$ $H(M)$ and $B(M)$ be as before. Then the following are
equivalent:
\begin{enumerate}
  \item $H(M)$ is gauge equivalent to a Hopf algebra.
  \item $B(M)$ is gauge equivalent to a Hopf algebra.
  \item $M$ is gauge equivalent to a $kG$-Hopf bimodule.
  \item $\Phi$ is a 3-coboundary.
  \item $(M_C)_{C \in \C}$ is gauge equivalent to an admissible collection of linear representations.
  \item $(M_C)_{C \in \C}$ is uniformly projective equivalent to a collection of linear representations.
\end{enumerate}
\end{corollary}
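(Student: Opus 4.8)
The plan is to establish a cycle of implications among the six conditions, leveraging the chain of equivalences $(1) \Leftrightarrow (2) \Leftrightarrow (3) \Leftrightarrow (4') \Leftrightarrow (5')$ already furnished by Theorem 4.4 in the special case where $N$ is a Hopf bimodule (equivalently $\Phi'$ is the trivial 3-cocycle). Indeed, a $kG$-Hopf bimodule is precisely a $(kG,\Phi')$-Majid bimodule with $\Phi'$ trivial, so $(3)$ says exactly that $M$ is gauge equivalent to a Majid bimodule whose reassociator is trivial. First I would observe that $(1) \Leftrightarrow (3)$ and $(2) \Leftrightarrow (3)$ follow immediately from Theorem 4.4 applied to the pair $(M,N)$ with $H(N)$ (resp. $B(N)$) a genuine Hopf algebra, since a graded Majid algebra $H(N)$ is an ordinary Hopf algebra exactly when its reassociator vanishes, which happens iff $N$ is a Hopf bimodule. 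Likewise $(3) \Leftrightarrow (5)$ and $(5) \Leftrightarrow (6)$ are inherited from the corresponding equivalences in Theorem 4.4 by specializing the target to collections of linear representations.

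The genuinely new content is therefore the equivalence of the cohomological condition $(4)$ with the rest, and this is where the main work lies. For $(3) \Rightarrow (4)$: if $M$ is gauge equivalent to a Hopf bimodule $N$, then by the definition of gauge equivalence of Majid bimodules there is a gauge transformation $F$ on $G$ with $\Phi' = \Phi_F = \Phi \partial F$, where $\Phi'$ is the trivial cocycle attached to the Hopf bimodule $N$. Hence $\Phi = (\Phi')(\partial F)^{-1} = \partial(F^{-1})$ is a 3-coboundary. For the converse $(4) \Rightarrow (3)$, suppose $\Phi = \partial F^{-1}$ for some 2-cochain; then I would choose the gauge transformation $F$ (after normalizing it so that $F(g,\epsilon)=F(\epsilon,g)=1$, which is possible since $\Phi$ is normalized) and form $M_F$, which by construction is a $(kG,\Phi_F)$-Majid bimodule with $\Phi_F = \Phi \partial F = $ trivial, i.e. an honest $kG$-Hopf bimodule. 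This closes the loop through the bimodule conditions.

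The anticipated main obstacle is the careful bookkeeping in $(4) \Rightarrow (3)$ of producing a \emph{normalized} gauge transformation witnessing that $\Phi$ is a coboundary. A priori a 3-coboundary is $\partial F$ for some not-necessarily-normalized 2-cochain $F$, whereas the gauge-equivalence machinery of Section 4 requires $F(g,\epsilon)=F(\epsilon,g)=1$. I expect to resolve this by adjusting $F$ by a 1-cochain: replacing $F(g,h)$ with $F(g,h)/\bigl(F(\epsilon,\epsilon)\bigr)$ and, more precisely, multiplying $F$ by the coboundary of a suitable $1$-cochain to kill its values on the diagonal involving $\epsilon$, using that $\Phi$ is normalized so that the adjusted cochain still has differential $\Phi$. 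This is a standard normalization argument in group cohomology, but one must check it interacts correctly with the definition (4.5) of $M_F$ so that the resulting Hopf bimodule structure on $M_F$ is well defined.

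Finally I would note that the equivalences involving $(4)$ can also be read off directly at the level of collections: by the description preceding this corollary, $(5)$ asserts the existence of a gauge $F$ with $\Phi \partial F$ trivial and each $M^F_C$ linear, and via formula (4.10) the associated 2-cocycles $\Tilde{\Phi}_C$ become 2-coboundaries on $Z_C$ precisely when $\Phi$ is a 3-coboundary on $G$; this gives an independent route $(4) \Leftrightarrow (5)$ confirming the consistency of the whole scheme. Assembling these implications into a single cycle completes the proof.
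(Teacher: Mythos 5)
Your overall architecture is close to the paper's: lean on the five-way gauge-equivalence theorem (Theorem 4.3 in the paper's numbering) for most links, and prove one cohomological link directly. Your direct link is $(3)\Leftrightarrow(4)$, which is correct as you argue it, and your normalization of the cochain $F$ is fine (in fact easier than you anticipate: since $\Phi=\partial F$ is normalized, $F(\epsilon,-)$ and $F(-,\epsilon)$ are forced to be the constant $F(\epsilon,\epsilon)$, so dividing $F$ by that scalar already normalizes it without changing $\partial F$). The problem is how you get in and out of conditions $(1)$ and $(2)$. Condition $(1)$ says $H(M)$ is gauge equivalent to \emph{some} Hopf algebra $H$; nothing forces $H$ to be graded, let alone of the quiver form $H(N)$ for a Majid bimodule $N$ over $G$. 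Theorem 4.3 only compares $H(M)$ with $H(N)$ for bimodules $M,N$ over $G$, so "applying it to the pair $(M,N)$ with $H(N)$ a genuine Hopf algebra" gives you only the implication [$H(M)$ gauge equivalent to some $H(N)$ with $N$ a Hopf bimodule] $\Rightarrow(3)$, not $(1)\Rightarrow(3)$. This is exactly the step the paper's own proof is designed to handle: given a gauge transformation $F$ on $H(M)$ with $H(M)_F\cong H$ and $H$ an honest Hopf algebra, the reassociator $\Phi_F$ of $H(M)_F$ is trivial, and restricting formula (4.2) to the group-likes turns it into (4.4), yielding $\Phi\,\partial(F|_{G})=1$, i.e. $(1)\Rightarrow(4)$ directly; from there your own construction gives $(3)$. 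Without some such argument your cycle of implications never exits condition $(1)$, so this is a genuine gap, though the repair is short.

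Separately, the "independent route" in your last paragraph rests on a false claim: it is not true that the transgressed $2$-cocycles $\Tilde{\Phi}_C$ are coboundaries on $Z_C$ precisely when $\Phi$ is a $3$-coboundary on $G$. The paper's own main example kills it: for $G=\Z_2$ over $k=\mathbb{C}$ with the nontrivial $\Phi$, one has $H^2(\Z_2,k^*)=1$, so both $\Tilde{\Phi}_\epsilon$ and $\Tilde{\Phi}_g$ are $2$-coboundaries (indeed $\Tilde{\Phi}_g=\partial\mu$ with $\mu(g)=i$), yet $\Phi$ is not a $3$-coboundary since $H^3(\Z_2,k^*)=\Z_2$. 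What saves the corollary is precisely the \emph{uniformity} built into conditions $(5)$ and $(6)$: the scalars $\mu^F_C$ must all arise from a single gauge transformation $F$ on $G$ via (4.10), and for $\Z_2$ formula (4.10) forces $\mu^F_C(g)=F(g,g)/\bigl(F(g,g)F(\epsilon,g)\bigr)=1$ for every $F$. This is exactly why $S(\pm i)$, hence $M_\pm(8)$, are not gauge equivalent to anything linear; if your biconditional were correct, Theorem 5.8 of the paper would be false. Since you offer this only as a consistency check it does not by itself sink the proof, but it should be deleted, and the gap in $(1)\Rightarrow(3)$ described above must be filled along the lines of the paper's direct argument.
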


\begin{proof}
We prove only $(1) \Leftrightarrow (4),$ since the other
equivalences are direct consequence of Theorem 4.3. Suppose that
$\Phi$ is a 3-coboundary, then there exists a 2-cochain $F$ such
that $\Phi=\partial F.$ Note that $F$ can be chosen in the gauge
transformations on $G.$ By (4.4) we have that $\Phi_{F^{-1}}$ is the
trivial 3-cocycle and so the Majid bimodule $M_{F^{-1}}$ defined by
by (4.5) is a $kG$-Hopf bimodule. Now it is clear that
$H(M)_{F^{-1}}=H(M_{F^{-1}})$ is a Hopf algebra. That is, $H(M)$ is
gauge equivalent to a Hopf algebra. For the converse, suppose that
$H(M)$ is gauge equivalent to a Hopf algebra $H.$ There exists a
gauge transformation $F$ on $H(M)$ such that $H(M)_F \cong H.$ Note
that the reassociator on $H$ is trivial, and then by (4.2) we have
that the 3-cocycle $\Phi_F$ on $G$ is trivial, which means that
$\Phi=\partial F^{-1}.$ We are done.
\end{proof}

\section{Majid algebras over $\Z_2$}

In this section, we apply the constructed machinery to give some
examples and a classification of finite-dimensional graded pointed
Majid algebras over the simplest nontrivial group $\Z_2.$

For the construction of graded Majid algebras on path coalgebras, we
have to calculate the product of paths. This can be done by the
quantum shuffle product as shown in \cite{qha1}. So firstly we need
to recall the the product formula in \cite{cr2}.

Suppose that $Q$ is a Hopf quiver with a necessary $kQ_0$-Majid
bimodule structure on $kQ_1.$ By $Q_l$ we denote the set of paths of
length $l.$ Let $p \in Q_l$ be a path. An $n$-thin split of it is a
sequence $(p_1, \ \cdots, \ p_n)$ of vertices and arrows such that
the concatenation $p_n \cdots p_1$ is exactly $p.$ These $n$-thin
splits are in one-to-one correspondence with the $n$-sequences of
$(n-l)$ 0's and $l$ 1's. Denote the set of such sequences by
$D_l^n.$ Clearly $|D_l^n|={n \choose l}.$ For $d=(d_1, \ \cdots, \
d_n) \in D_l^n,$ the corresponding $n$-thin split is written as
$dp=((dp)_1, \ \cdots, \ (dp)_n),$ in which $(dp)_i$ is a vertex if
$d_i=0$ and an arrow if $d_i=1.$ Let $\alpha=a_m \cdots a_1$ and
$\beta=b_n \cdots b_1$ be paths of length $m$ and $n$ respectively.
Let $d \in D_m^{m+n}$ and $\bar{d} \in D_n^{m+n}$ the complement
sequence which is obtained from $d$ by replacing each 0 by 1 and
each 1 by 0. Define an element
$$(\alpha \cdot \beta)_d=[(d\alpha)_{m+n}.(\bar{d}\beta)_{m+n}] \cdots
[(d\alpha)_1.(\bar{d}\beta)_1]$$ in $kQ_{m+n},$ where
$[(d\alpha)_i.(\bar{d}\beta)_i]$ is understood as the action of
$kQ_0$-Majid bimodule on $kQ_1$ and these terms in different
brackets are put together by cotensor product, or equivalently
concatenation. In terms of these notations, the formula of the
product of $\alpha$ and $\beta$ is given as follows:
\begin{equation}
\alpha \cdot \beta=\sum_{d \in D_m^{m+n}}(\alpha \cdot \beta)_d \ .
\end{equation}

We should remark that, for general Majid algebras, the product is
not associative. So the order must be concerned for the product of
more than two terms.

\begin{convention}
For an arbitrary path $p$ and an integer $n \ge 3,$ let
$p^{\stackrel{\rightharpoonup}{n}}$ denote the product
$\stackrel{n-2}{\overbrace{( \cdots (}}p \cdot p) \cdot \cdots \cdot
p.$ For consistency, when $n < 3,$ we still use the notation
$p^{\stackrel{\rightharpoonup}{n}}$ although there is no risk of
associative problem.
\end{convention}

For simplicity, we assume that $k$ {\em is the field of complex
numbers} in the rest of the paper. Let $g$ denote the generator of
the group $\Z_2.$ There is only one nontrivial 3-cocycle $\Phi$ on
$\Z_2$ by (2.5)-(2.6). That is, the value of $\Phi$ on the triple
$(g,g,g)$ is $-1$ and on any other triples is 1. The induced
2-cocycle $\Tilde{\Phi}_\epsilon$ is trivial, while $\Tilde{\Phi}_g$
is nontrivial satisfying $\Tilde{\Phi}_g(g,g)=-1.$ For the
classification of $(k\Z_2,\Phi)$-Majid bimodules, it suffices to
study the representation theory of $k^{\Tilde{\Phi}_\epsilon}\Z_2 =
k\Z_2$ and $k^{\Tilde{\Phi}_g}\Z_2.$ The former is well-known. For
the latter, let $\star$ denote the product and then we have
\begin{equation} g \star g = \Tilde{\Phi}_g(g,g)g^2 = -\epsilon . \end{equation}
Therefore the action of $g$ on a linear space is always
diagonalizable and so simple $k^{\Tilde{\Phi}_g}\Z_2$-modules are
1-dimensional. Assume $V=kv$ is a simple
$k^{\Tilde{\Phi}_g}\Z_2$-module and $g \triangleright v = \lambda
v.$ It follows from (5.2) that $\lambda^2=-1,$ hence $\lambda=\pm
i.$ There are two simple $k^{\Tilde{\Phi}_g}\Z_2$-modules, denoted
by $S(+i)$ and $S(-i)$ respectively. An arbitrary
$k^{\Tilde{\Phi}_g}\Z_2$-module is of the form $mS(+i) \oplus
nS(-i)$ where $m,n$ are non-negative integers.

Let $R_1=g$ be a ramification datum of $\Z_2.$ Then the associated
Hopf quiver $Q(\Z_2,R_1),$ denoted by $\Gamma^1$ for short, is as
follows:
$$ \xy (0,0)*{\epsilon}; (50,0)*{g.}; (25,3)*{X}; (25,-3.3)*{Y}; {\ar
(2,1)*{}; (47,1)*{}}; {\ar (47,-1)*{};(2,-1)*{}};
\endxy $$ Conversely, this is the unique way for $\Gamma^1$ to be viewed as a Hopf
quiver. So the set of graded Majid algebra structures on the path
coalgebra $k\Gamma^1$ is determined by all the possible Majid
bimodule structures on $M=kX \oplus kY$ over the group $\Z_2.$ We
are interested in nontrivial Majid algebras (i.e., not gauge
equivalent to Hopf algebras), so we take the nontrivial 3-cocycle
$\Phi$ on $\Z_2$ as above. Note that $\Phi$ is not a 3-coboundary by
the well-known fact $H^3(\Z_2,k^*)=\Z_2.$ By Theorem 3.3, it is
enough to consider the $k^{\Tilde{\Phi}_g}Z_2$-module structure on
$^gM^\epsilon=kX,$ which is either $S(+i)$ or $S(-i)$ by the
previous argument. It is clear that these two modules are not gauge
equivalent, since by (4.8) they are stable under any gauge
transformation of $\Z_2.$ Hence there are two non-trivial graded
Majid algebras on $k\Gamma^1$ up to gauge equivalence described as
follows.

\begin{example}
The one induced by $S(+i),$ denoted by $k\Gamma^1(+i).$ By
Theorem 3.3, we may extend $S(+i)$ to a Majid bimodule on $M$ as:
  \begin{equation}
  X.g=Y, \quad Y.g=-X, \quad g.X=iY, \quad g.Y=iX.
  \end{equation}
We have the following interesting product formulae of paths:
\begin{equation}
X \cdot X = (1+i)YX, \quad YX \cdot X = -iXYX, \quad YX \cdot YX =
XYX \cdot X =0.
\end{equation}
Consider the sub Majid algebra generated by the vertices and arrows
of $\Gamma^1.$ By (5.3) it is actually generated by $g$ and $X.$
Since $((X \cdot X) \cdot X) \cdot X)=0$ and the products in other
orders differ only by nonzero scalars according to the definition of
Majid algebras, then along with (5.4) it is easy to show that all
the paths of length less than 4 constitute a basis for this sub
Majid algebra. In particular it is 8-dimensional. Let $M_+(8)$
denote this Majid algebra.
\end{example}

\begin{example}
The one induced by $S(-i),$ denoted by $k\Gamma^1(-i).$ We may
extend $S(-i)$ to a Majid bimodule on $M$ as:
  \begin{equation}
  X.g=Y, \quad Y.g=-X, \quad g.X=-iY, \quad g.Y=-iX.
  \end{equation}
We have the following similar product formulae of paths:
\begin{equation}
X \cdot X = (1-i)YX, \quad YX \cdot X = iXYX, \quad YX \cdot YX =
XYX \cdot X =0.
\end{equation}
Similarly the sub Majid algebra generated by the vertices and arrows
of $\Gamma^1$ is 8-dimensional and all the paths of length less than
4 constitute a basis. Let $M_-(8)$ denote this Majid algebra.
\end{example}

Now let $R_2=2g$ be a ramification datum of $\Z_2$ and $\Gamma^2$
the associated Hopf quiver: $$ \xy (0,0)*{\epsilon}; (50,0)*{g.};
{\ar (2,1)*{}; (47,1)*{}}; {\ar (47,-1)*{};(2,-1)*{}}; {\ar
(2,2)*{}; (47,2)*{}}; {\ar (47,-2)*{};(2,-2)*{}};
\endxy $$ Let $X_1,X_2$ denote the two arrows going from $\epsilon$
to $g,$ and $Y_1,Y_2$ those in the converse direction. The set of
nontrivial graded Majid algebra structures on $k\Gamma^2$ is
determined by all the $k^{\Tilde{\Phi}_g}Z_2$-module structures on
$^gM^\epsilon = kX_1 \oplus kX_2,$ which are $S(+i) \oplus S(+i), \
S(-i) \oplus S(-i),$ and $S(+i) \oplus S(-i).$ Hence there are three
non-trivial graded Majid algebras on $k\Gamma^2,$ denoted by
$k\Gamma^2(++), \ k\Gamma^2(--),$ and $k\Gamma^2(\pm)$ respectively.

\begin{example}
The product of the Majid algebra $k\Gamma^2(\pm)$ is determined by
the Majid bimodule extended by $S(+i) \oplus S(-i).$ Without loss of
generality, we may assume
\begin{gather}
X_1.g=Y_1, \quad Y_1.g=-X_1, \quad g.X_1=iY_1, \quad g.Y_1=iX_1, \\
X_2.g=Y_2, \quad Y_2.g=-X_2, \quad g.X_2=-iY_2, \quad g.Y_2=-iX_2.
\end{gather}
Then we have product formulae like (5.4) by replacing $X,Y$ as
$X_1,Y_1$ and (5.6) by replacing $X,Y$ as $X_2,Y_2.$ We also have
\begin{equation}
X_1 \cdot X_2 = Y_1X_2 - iY_2X_1, \quad X_2 \cdot X_1 = iY_1X_2 +
Y_2X_1.
\end{equation} This implies that \begin{equation} X_1 \cdot X_2 =-iX_2
\cdot X_1. \end{equation} Consider the sub Majid algebra generated
by the vertices and arrows of $\Gamma^2$ and denote it by $M(32).$
By (5.7)-(5.8) it is actually generated by $g, X_1, X_2.$ According
to the previous product formulae, it is not difficult to verify that
$$\{ g^l \cdot (X_1^{\stackrel{\rightharpoonup}{m}} \cdot
X_2^{\stackrel{\rightharpoonup}{n}}) \ | \ 0 \le m,n \le 3, \ l=0,1
\}$$ constitutes a basis for $M(32).$ In particular $M(32)$ is
32-dimensional.
\end{example}

We derive some product formulae for $k\Gamma^2(++)$ and
$k\Gamma^2(--)$ in the same manner.

\begin{example}
Extend $S(+i) \oplus S(+i)$ to the Majid bimodule as follows:
\begin{gather}
X_1.g=Y_1, \quad Y_1.g=-X_1, \quad g.X_1=iY_1, \quad g.Y_1=iX_1,  \\
X_2.g=Y_2, \quad Y_2.g=-X_2, \quad g.X_2=iY_2, \quad g.Y_2=iX_2.
\end{gather}
Then we have product formulae for $k\Gamma^2(++)$ like (5.4) by
replacing $X,Y$ as $X_j,Y_j,j=1,2.$ We also have
\begin{equation}
X_1 \cdot X_2 = Y_1X_2 + iY_2X_1, \quad X_2 \cdot X_1 = iY_1X_2 +
Y_2X_1.
\end{equation} This implies that \begin{equation} X_2 \cdot X_1 -
iX_1 \cdot X_2 = 2Y_2X_1. \end{equation}
\end{example}

\begin{example}
Extend $S(-i) \oplus S(-i)$ to the Majid bimodule as follows:
\begin{gather}
X_1.g=Y_1, \quad Y_1.g=-X_1, \quad g.X_1=-iY_1, \quad g.Y_1=-iX_1,  \\
X_2.g=Y_2, \quad Y_2.g=-X_2, \quad g.X_2=-iY_2, \quad g.Y_2=-iX_2.
\end{gather}
Then we have product formulae for $k\Gamma^2(--)$ like (5.6) by
replacing $X,Y$ as $X_j,Y_j,j=1,2.$ We also have
\begin{equation}
X_1 \cdot X_2 = Y_1X_2 - iY_2X_1, \quad X_2 \cdot X_1 = Y_2X_1 -
iY_1X_2.
\end{equation} This implies that \begin{equation} X_1 \cdot X_2 + iX_2
\cdot X_1 = 2Y_1X_2. \end{equation}
\end{example}

We are interested in their sub Majid algebras generated by the
vertices and arrows of $\Gamma^2.$ It turns out that the situation
is quite different from that of $k\Gamma^2(\pm).$

\begin{proposition}
The sub Majid algebras of $k\Gamma^2(++)$ and $k\Gamma^2(--)$
generated by the set of vertices and arrows of $\Gamma^2$ are
infinite-dimensional.
\end{proposition}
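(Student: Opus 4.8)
The plan is to show that the sub Majid algebras of $k\Gamma^2(++)$ and $k\Gamma^2(--)$ are infinite-dimensional by producing, for each length $\ell$, a nonzero path-product, so that the generated subalgebra contains nonzero elements in every degree. Since these subalgebras live inside the path coalgebra $k\Gamma^2$, which is graded by path length, infinite-dimensionality follows once I exhibit a nonzero element of arbitrarily high degree. The natural candidates are the iterated self-products $X_1^{\stackrel{\rightharpoonup}{n}}$ (or $X_2^{\stackrel{\rightharpoonup}{n}}$), so the crux is to compute these and verify they never vanish.

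**First I would** fix attention on $k\Gamma^2(++)$ and analyze the single-letter iterated product $X_1^{\stackrel{\rightharpoonup}{n}}$ using the quantum shuffle formula (5.1) together with the Majid bimodule action (5.12)–(5.13). The key contrast with Example 5.5 is this: in the $(\pm)$ case the relation $X\cdot X=(1\pm i)YX$ produced, after one more multiplication, an element like $XYX\cdot X$ whose bimodule coefficients conspired to give $0$ — that is what truncated $M(32)$ at length $3$ in each variable. Here I expect the coefficients coming from $S(+i)\oplus S(+i)$ (equivalently, the absence of the cancelling sign that appeared in the mixed case) to prevent such vanishing. Concretely, I would compute $X_1\cdot X_1$, then $(X_1\cdot X_1)\cdot X_1$, and inductively track the leading shuffle term. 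The shuffle product of a length-$(n-1)$ path with a single arrow produces a sum of $n$ terms indexed by $D_1^n$, each a length-$n$ path obtained by inserting the arrow action into one slot; I would argue that these $n$ terms, being distinct basis paths of $k\Gamma^2$, cannot cancel one another, so the product is nonzero as soon as at least one coefficient is nonzero.

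**The hard part will be** controlling the coefficients: each slot-insertion contributes a product of $\Tilde\Phi_g$-action scalars (powers of $\pm i$) and possibly reassociation factors from the non-associativity of the Majid product, and I must rule out a global cancellation of the kind that killed the $(\pm)$ case. The cleanest route is to pick one distinguished basis path appearing in $X_1^{\stackrel{\rightharpoonup}{n}}$ — say the fully left-nested word $Y_1\cdots Y_1 X_1$ type term, or whichever monomial receives a contribution from only a single $d\in D_1^n$ — and show its coefficient is a nonzero product of unit-modulus scalars, hence nonzero. Because distinct thin splits yield distinct paths as basis vectors, a single surviving monomial with nonzero coefficient forces $X_1^{\stackrel{\rightharpoonup}{n}}\neq 0$, and since $X_1^{\stackrel{\rightharpoonup}{n}}$ has length $n$ it lies in degree $n$ of the subalgebra. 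Running $n$ over all positive integers then gives nonzero elements in infinitely many degrees.

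**To finish**, I would record that the sign pattern in (5.14)–(5.15) (respectively (5.18)–(5.19)) is exactly the feature distinguishing $(++)$ and $(--)$ from $(\pm)$: in the mixed case the relation (5.11), $X_1\cdot X_2=-iX_2\cdot X_1$, together with $X_1^{\stackrel{\rightharpoonup}{4}}=0$ closed everything off into a finite basis, whereas here no such collapse occurs. The same computation handles $k\Gamma^2(--)$ by the evident symmetry (swap $+i\leftrightarrow -i$ throughout), so I would treat $(++)$ in detail and remark that $(--)$ is entirely analogous. I expect the whole argument to rest on one inductive coefficient computation, and the main obstacle is bookkeeping the reassociation scalars carefully enough to certify non-vanishing of the chosen leading term.
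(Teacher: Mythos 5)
Your plan rests on the claim that the iterated self-products $X_1^{\stackrel{\rightharpoonup}{n}}$ are nonzero for all $n$ in $k\Gamma^2(++)$, the truncation in the mixed case being attributed to a ``cancelling sign'' special to $S(+i)\oplus S(-i)$. This claim is false, and the gap is fatal. The computation of $X_1^{\stackrel{\rightharpoonup}{n}}$ involves only the arrows $X_1,Y_1$ and the bimodule action restricted to $kX_1\oplus kY_1$, i.e.\ only the summand $S(+i)$; this restricted structure is \emph{identical} in $k\Gamma^2(++)$, $k\Gamma^2(\pm)$ and in $k\Gamma^1(+i)$. Indeed, Example 5.5 states explicitly that the formulae (5.4) hold in $k\Gamma^2(++)$ with $X,Y$ replaced by $X_j,Y_j$, so in particular $X_1\cdot X_1=(1+i)Y_1X_1$, $Y_1X_1\cdot X_1=-iX_1Y_1X_1$ and $X_1Y_1X_1\cdot X_1=0$; hence $X_1^{\stackrel{\rightharpoonup}{n}}=0$ for $n\ge 4$ in the $(++)$ case just as in $M_+(8)$, and no choice of ``leading monomial'' can rescue a product that is identically zero. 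Your subsidiary argument also fails on its own terms: distinct thin splits need not produce distinct paths, so the terms of the shuffle sum can and do cancel. For instance both thin splits of $X\cdot X$ contribute to the single path $YX$ (giving the coefficient $1+i$), and such coefficient interference is exactly what kills $X_1Y_1X_1\cdot X_1$.

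What actually distinguishes $(++)$ and $(--)$ from $(\pm)$ is the behavior of the \emph{cross} products, and this is what the paper's proof uses. In $k\Gamma^2(++)$, equation (5.14) gives $X_2\cdot X_1-iX_1\cdot X_2=2Y_2X_1$, so the length-two cycle $Y_2X_1$ (a path from $\epsilon$ back to $\epsilon$) lies in the sub Majid algebra generated by vertices and arrows; by induction with the shuffle formula (5.1) one shows
\begin{equation*}
(Y_2X_1)^{\stackrel{\rightharpoonup}{n}}=n!\,\overbrace{Y_2X_1\cdots Y_2X_1}^{n \text{ copies}}+\text{other paths}\neq 0
\end{equation*}
(the field has characteristic $0$), so these powers are linearly independent and the subalgebra is infinite-dimensional; the $(--)$ case is analogous via (5.18). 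In the $(\pm)$ case, by contrast, (5.10) reads $X_1\cdot X_2=-iX_2\cdot X_1$, so the cross products yield no element beyond the span of $Y_1X_2-iY_2X_1$, no isolated cycle such as $Y_2X_1$ is produced, and the algebra truncates to the $32$-dimensional $M(32)$. So the mechanism you need is a cross-product identity creating a nonvanishing cycle, not nonvanishing of single-arrow powers.
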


\begin{proof}
We only prove the claim for $k\Gamma^2(++).$ The other is similar.
By (5.14), the path $Y_2X_1$ lies in the sub Majid algebra.
Calculate the power $(Y_2X_1)^{\stackrel{\rightharpoonup}{n}}.$ By
(5.1) and induction on $n,$  it is not hard to show that
\begin{equation}
(Y_2X_1)^{\stackrel{\rightharpoonup}{n}}=n!\stackrel{\textrm{n
copies of $Y_{2}X_{1}$}} {\overbrace{Y_{2}X_{1}\; \cdots\;
Y_{2}X_{1}}}+\; \textrm{other paths}.
\end{equation}
This implies that $(Y_2X_1)^{\stackrel{\rightharpoonup}{n}} \ne 0$
for all $n$ and that the set
$\{(Y_2X_1)^{\stackrel{\rightharpoonup}{n}} \ | \ n \ge 0 \}$ is
linearly independent. We are done.
\end{proof}

An arbitrary ramification datum of $\Z_2$ is of the form
$R=m\epsilon+ng.$ The associated Hopf quiver looks like the
following
$$ \xy (0,0)*{\epsilon}; (50,0)*{g}; (25,2)*{.}; (25,3)*{.}; (25,4)*{.};
(25,-2)*{.}; (25,-3)*{.}; (25,-4)*{.}; {\ar (2,1)*{}; (48,1)*{}};
{\ar (48,-1)*{};(2,-1)*{}}; {\ar (2,5)*{}; (48,5)*{}}; {\ar
(48,-5)*{};(2,-5)*{}}; (51,0.5)*{}="A";(51,1.5)*{}="B"; "A";
"B"**\crv{(54,1.5) & (59,5) & (54,8)} ?(1)*\dir{>};
(51,-0.5)*{}="A";(51,-1.5)*{}="B"; "A"; "B"**\crv{(54,-1.5) &
(59,-5) & (54,-8)} ?(1)*\dir{>}; (56,1)*{\vdots};
(-1,0.5)*{}="A";(-1,1.5)*{}="B"; "A"; "B"**\crv{(-4,1.5) & (-9,5) &
(-4,8)} ?(1)*\dir{>}; (-1,-0.5)*{}="A";(-1,-1.5)*{}="B"; "A";
"B"**\crv{(-4,-1.5) & (-9,-5) & (-4,-8)} ?(1)*\dir{>};
(-6,1)*{\vdots};
\endxy $$
where there are $m$ loops attached to both vertices, and there are
$n$ arrows going from $\epsilon$ to $g$ and vice versa. As before,
one can apply the representation theory of twisted group algebras to
get Majid bimodule structures on the space of arrows and then the
product formulae for the graded Majid algebras.

Now we are ready to give our first classification result for
finite-dimensional graded pointed Majid algebras with a help of the
quiver framework.

\begin{theorem}
Assume that $H$ is a finite-dimensional graded pointed nontrivial
Majid algebra whose set of group-likes is equal to $\Z_2.$ Then $H$
is isomorphic to one of $\{M_+(8), \ M_-(8), \ M(32)\}.$ Moreover,
$M_+(8)$ is not gauge equivalent to $M_-(8).$
\end{theorem}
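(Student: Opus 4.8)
The plan is to push the whole analysis onto the degree-one part $M:=H_1$, regarded as a $(k\Z_2,\Phi)$-Majid bimodule, and to let finite-dimensionality pin down first $\Phi$ and then $M$. First I would apply the Gabriel type theorem of \cite{qha1}: $H$ embeds as a graded Majid algebra into $H(M)$, the graded Majid algebra on the Hopf quiver with vertex group $\Z_2$ determined by $M$, so that its reassociator is a $3$-cocycle $\Phi$ on $\Z_2$ and the sub Majid algebra $B(M)$ generated in degrees $0$ and $1$ sits as $B(M)\subseteq H\subseteq H(M)$. If $\Phi$ were a coboundary, say $\Phi=\partial F$ for a gauge transformation $F$ on $\Z_2$, then lifting $F$ to degree zero would make $H_{F^{-1}}$ a Majid algebra with trivial reassociator, i.e. a Hopf algebra, so $H$ would be gauge equivalent to a Hopf algebra; as this contradicts nontriviality and $H^3(\Z_2,k^*)=\Z_2$, the cocycle $\Phi$ must be the unique nontrivial one. (If $M=0$ then $H$ is just the coradical $(k\Z_2,\Phi)$, which I read the hypothesis as excluding, so assume $M\neq0$.) By Theorem 3.3, $M$ is encoded by an ordinary $\Z_2$-representation $M_\epsilon={}^{\epsilon}M^{\epsilon}$ (the loops at the two vertices) together with a $k^{\Tilde{\Phi}_g}\Z_2$-module $M_g={}^{g}M^{\epsilon}=p\,S(+i)\oplus q\,S(-i)$ (the arrows between them).

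The core reduction is that $\dim H<\infty$, hence $\dim B(M)<\infty$, forces $M_\epsilon=0$ and $(p,q)\in\{(1,0),(0,1),(1,1)\}$. For the loops: a loop $Z\in{}^{\epsilon}M^{\epsilon}$ has source and target $\epsilon$, so every bracket occurring in its quantum shuffle powers is an action by $\epsilon$ and every $\Phi$-value is $\Phi(\epsilon,-,-)=1$; the product is the plain (sign-free) shuffle, so $Z^{\stackrel{\rightharpoonup}{n}}\neq0$ for all $n$ exactly as in the estimate (5.19), and the resulting $k[Z]\subseteq B(M)$ is infinite-dimensional. Thus $M_\epsilon=0$. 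For the multiplicities: if $p\ge2$ (resp. $q\ge2$) then by Theorem 3.3 $M$ contains $S(+i)^{\oplus2}$ (resp. $S(-i)^{\oplus2}$) as a sub Majid bimodule, whose Hopf quiver is a subquiver of that of $M$; the sub Majid algebra it generates in degrees $0$ and $1$ is precisely the one shown to be infinite-dimensional in Proposition 5.7 for $k\Gamma^2(++)$ (resp. $k\Gamma^2(--)$), and it embeds into $B(M)$, forcing $p,q\le1$. With $M\neq0$ the surviving cases are precisely those of Examples 5.2, 5.3 and 5.4, where $B(M)$ is $M_+(8)$, $M_-(8)$ and $M(32)$ respectively.

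It remains to upgrade the inclusion $B(M)\subseteq H$ to an equality, and this is where I expect the real work. One must show that every finite-dimensional graded Majid subalgebra of $H(M)$ containing $M$ in degree $1$ is already generated in degrees $0$ and $1$. I would argue by contradiction: choose a path $w\in H\setminus B(M)$ of minimal degree and use closure of $H$ under left and right multiplication by the arrows, through the product formula (5.1), to manufacture an infinite linearly independent family — the direct analogue of the to-all-orders nonvanishing behind (5.19). The main obstacle is that $B(M)$ is a proper subspace of $H(M)$ already in low degree for $M(32)$ (only six of the eight length-two paths lie in $B(M)$), so one cannot simply take a top-degree element and kill it; instead the escape-to-infinity must be verified for each admissible ``extra'' path, and it is the explicit products (5.4), (5.6) and (5.9)--(5.10) that should make this bookkeeping tractable. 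Granting this, $H=B(M)$ and $H$ is isomorphic to one of $M_+(8),M_-(8),M(32)$.

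Finally, for the last assertion I would use Theorem 4.3: $M_+(8)$ and $M_-(8)$ are gauge equivalent iff the one-element collections $S(+i)$ and $S(-i)$ are uniformly projectively equivalent, that is, related by the scalar $\mu^F_g$ of (4.11) for some gauge transformation $F$ on $\Z_2$. Evaluating (4.8) at $f=g$ and using $g^{-1}=g$, $g^2=\epsilon$ and the normalization $F(\epsilon,g)=1$ gives $\mu^F_g(g)=F(g,g)/\bigl(F(g,g)F(\epsilon,g)\bigr)=1$ for every such $F$. Hence the eigenvalue $\pm i$ of $g\triangleright$ on ${}^{g}M^{\epsilon}$ is a gauge invariant, so $S(+i)$ and $S(-i)$ can never be uniformly projectively equivalent and $M_+(8)$ and $M_-(8)$ are not gauge equivalent.
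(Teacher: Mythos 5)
Most of your proposal is sound and runs parallel to the paper's own argument: the Gabriel-type embedding, forcing $\Phi$ to be the unique nontrivial $3$-cocycle via Corollary 4.4, killing loops through nonvanishing shuffle powers, capping the multiplicities at $p,q\le 1$ via Proposition 5.7, and identifying the three candidates $M_+(8),M_-(8),M(32)$. Your treatment of the last assertion is also exactly the paper's reasoning (Theorem 4.3 plus the remark after (4.8)), with the computation $\mu^F_g(g)=F(g,g)/\bigl(F(g,g)F(\epsilon,g)\bigr)=1$ spelled out more explicitly than in the paper.

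The genuine gap is the step you flag with ``Granting this.'' Showing that a finite-dimensional graded large sub Majid algebra cannot be strictly bigger than $B(M)$ is not a routine upgrade --- it is the substantive half of the paper's proof, and your minimal-degree-path strategy is only a sketch (indeed, elements of $H\setminus B(M)$ are linear combinations, not paths, so some coalgebra-theoretic care is needed). The paper settles it case by case with explicit computations. For $k\Gamma^1(\pm i)$: if $H\supsetneq M_\pm(8)$ is graded, then since $M_\pm(8)$ already contains all paths of length $<4$, the graded subcoalgebra $H$ must contain a path of length exactly $4$, hence $YXYX$; by induction with (5.1), $(YXYX)^{\stackrel{\rightharpoonup}{s}}$ is the $s$-fold concatenation with coefficient $1$, so it never vanishes and $H$ is infinite-dimensional. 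For $k\Gamma^2(\pm)$: if $H\supsetneq M(32)$, then after excluding segments $Y_1X_1Y_1X_1$ and $Y_2X_2Y_2X_2$ by the same mechanism, $H$ must contain a combination of $Y_1X_2$ and $Y_2X_1$ linearly independent of $X_1\cdot X_2=Y_1X_2-iY_2X_1$ (which already lies in $M(32)$), whence $Y_2X_1\in H$; then $(Y_2X_1)^{\stackrel{\rightharpoonup}{s}}=s!\,Y_2X_1\cdots Y_2X_1+\textrm{other paths}\neq 0$ as in (5.19), again a contradiction. Without these computations (precisely the ``bookkeeping'' you defer), the uniqueness claim --- that $H$ equals, rather than merely contains, one of the three algebras --- is not established, so the proposal as written does not prove the theorem.
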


\begin{proof}
By assumption we can write $H=\oplus_{n \ge 0}H_n$ where
$H_0=k\Z_2.$ Let $\Phi$ denote the reassociator of $H.$ Since $\Phi$
concentrates at $M_0$ by the graded condition, it is a 3-cocycle on
$\Z_2.$ Moreover $\Phi$ is nontrivial by Corollary 4.4, since the
Majid algebra $H$ is not gauge equivalent to a Hopf algebra.

By the Gabriel type theorem for pointed Majid algebras, there exists
a unique Hopf quiver of the form $Q(\Z_2,R),$ where $R$ is a
ramification datum of $\Z_2,$ such that $H$ is isomorphic to a large
sub Majid algebra of a graded Majid algebra structure on
$kQ(\Z_2,R).$ Recall that ``large" means containing the subspace
$kQ(\Z_2,R)_0 \oplus kQ(\Z_2,R)_1.$ In the following we always view
$H$ as a sub Majid algebra of $kQ(\Z_2,R).$

We claim that the Hopf quiver $Q(\Z_2,R)$ of $H$ can only be
$\Gamma^1$ or $\Gamma^2.$ Assume that $R=m\epsilon+ng.$ If $m
> 0,$ then there is at least one loop attached to $\epsilon.$
Let $\ell$ denote the loop and we have $\ell \in H$ since $H$ is
large. Now we calculate $\ell^{\stackrel{\rightharpoonup}{s}}$ by
making use of (5.1). By induction on $s,$ we get
$\ell^{\stackrel{\rightharpoonup}{s}}=s!\stackrel{\textrm{n}}
{\overbrace{\ell \; \cdots \; \ell}}.$ It follows that the infinite
set $\{ \ell^{\stackrel{\rightharpoonup}{s}} \ | \ s \ge 0 \}$ is
linearly independent, which contradicts the finiteness assumption of
$H.$ If $n > 2,$ then there are at least three arrows going from
$\epsilon$ to $g.$ That means $kQ(\Z_2,R)$ must contain at least a
sub Majid algebra as $k\Gamma^2(++)$ or $k\Gamma^2(--).$ By
Proposition 5.7, the minimal large sub Majid algebra (generated by
the set of vertices and arrows) is already infinite-dimensional,
which contradicts the finiteness assumption again. Therefore, $m=0$
and $n \le 2,$ which justifies the claim.

Now by Examples (5.2)-(5.4) and Proposition (5.7) we can conclude
that $H$ must be a large sub Majid algebra of $k\Gamma^1(+i),
k\Gamma^1(-i)$ or $k\Gamma^2(\pm).$ Therefore $H$ must contain one
of $\{M_+(8), \ M_-(8), \ M(32)\}$ as a sub Majid algebra, since
these are minimal large sub Majid algebras. It remains to prove that
$M_+(8), \ M_-(8),$ and $M(32)$ are the only finite-dimensional
large sub Majid algebras of $k\Gamma^1(+i), \ k\Gamma^1(-i),$ and
$k\Gamma^2(\pm)$ respectively. We will verify this case by case.

Assume that $H \supsetneq M_+(8)$ is a graded sub Majid algebra of
$k\Gamma^1(+i).$ Then $H$ must contain paths of length $\ge 4.$
Consider the coproduct of such paths, then one can see that there
exist paths of length exactly equal to 4. So we have $YXYX \in H.$
Calculate $(YXYX)^{\stackrel{\rightharpoonup}{s}}.$ By induction on
$s$ we have $(YXYX)^{\stackrel{\rightharpoonup}{s}}=\stackrel{s \
\textrm{terms of} \ YXYX}{\overbrace{YXYX \ \cdots \ YXYX}}.$ It
follows that $H$ must be infinite-dimensional. Similarly we can
prove the claim for $M_-(8).$

Assume that $H \supsetneq M(32)$ is a finite-dimensional graded sub
Majid algegra of $k\Gamma^2(\pm).$ That means $g, X_1, X_2$ can not
fully generate $H.$ As the above argument, paths with segments like
$Y_1X_1Y_1X_1$ or $Y_2X_2Y_2X_2$ can not occur in $H.$ Consider the
linear combinations of $Y_2X_1$ and $Y_1X_2$ occurs in $H.$ If all
of them are linearly dependent to $Y_1X_2 - iY_2X_1 = X_1 \cdot
X_2,$ then $g, X_1, X_2$ generate $H.$ There must be linear
combinations of $Y_2X_1$ and $Y_1X_2$ which are linearly independent
to $Y_1X_2 - iY_2X_1.$ In that case, we have $Y_2X_1 \in H.$ Now
calculate $(Y_2X_1)^{\stackrel{\rightharpoonup}{s}}$ and get a
formula like (5.19), then we arrive at a contradiction.

The second claim is a direct consequence of Theorem 4.3 and the fact
that $S(+i)$ is not gauge equivalent to $S(-i).$ The proof is
completed.
\end{proof}

We conclude the paper with some remarks. Firstly, if we take $\Phi$
to be the trivial 3-cocycle on $\Z_2$ and carry out the previous
procedure, then we get graded pointed Hopf algebras. To get
finite-dimensional graded pointed Hopf algebras over $Z_2,$ we need
to work on Hopf quivers of the form $Q(\Z_2,ng).$ For all integers
$n,$ one can recover the well-known Nichols' Hopf algebras
\cite{nichols} of dimension $2^{n+1}$ on the Hopf quivers
$Q(\Z_2,ng),$ which provide a classification of finite-dimensional
graded pointed Hopf algebras over $\Z_2.$ For those who are familiar
with this result, Theorem 5.8 seems weird at first sight. We believe
that our approach gives an acceptable representation-theoretic
explanation for this phenomenon.

Secondly, note that the notions of Majid algebras and quasi-Hopf
algebras are dual to each other. In particular, the dual of a
finite-dimensional pointed Majid algebra is an elementary quasi-Hopf
algebras and vice versa. Our method can be dualized to get the
corresponding results on quasi-Hopf algebras by making use of path
algebras and quasi-Hopf bimodules (see \cite{sch2}) over the dual of
group algebras. For example, the dual of Theorem 5.8 recovers
Etingof and Gelaki's classification of elementary graded non-trivial
quasi-Hopf algebras with radical of codimension 2 in \cite{eg1}. It
is easy to check that the dual of our Majid algebras $M_+(8), \
M_-(8), \ M(32)$ are isomorphic to their quasi-Hopf algebras
$H_+(8), \ H_-(8), \ H(32)$ respectively. Moreover, the coproducts
of $H_+(8), \ H_-(8), \ H(32)$ can be given by the quasi-bicomodules
of the quasi-Hopf bimodules (i.e., the dual of our Majid bimodules
in Examples 5.2-5.4) in a conceptual way. It should be pointed out
that these two approaches are not just simply dual to each other.
Our approach emphasizes the coalgebraic side and the Majid algebras
are constructed as sub algebras, while the approach of Etingof and
Gelaki emphasizes the algebraic side and their quasi-Hopf algebras
are constructed as quotient algebras. It is better to combine both,
especially when we want to determine a finite-dimensional
quasi-quantum group. For example, in the proof of Theorem 5.8, our
approach gives easily the lower bound of the dimension of a pointed
Majid algebra in concern, while Etingof and Gelaki's approach gives
easily the upper bound of the dimension of the quasi-Hopf algebra
(=the dual of our Majid algebra), then the claim follows from the
dimension comparison.

Finally, we would like to remark that the quiver presentation for
Majid algebras and quasi-Hopf algebras is very useful in studying
their structure and (co)representation theory. For example, the
quiver presentations of $M_+(8)$ and $M_-(8)$ indicate that they are
monomial in the sense of \cite{chyz}. Then similar to [loc. cit.],
one can show by quiver representation theory that their comodule
categories have a very interesting property, namely there are only
finitely many indecomposable objects up to isomorphism. This
phenomenon is called finite representation type and plays very
import role in the representation theory of algebras (see
\cite{ass}). The study of such finite-dimensional algebras has been
a central theme all along. Thus in our setting, a very natural
problem is to give a complete classification of finite-dimensional
pointed Majid algebras of finite corepresentation type in the sense
of \cite{ll}. According to the Tannakian formalism, it amounts to a
classification of the class of finite tensor categories in which
there are finitely many indecomposable objects up to isomorphism
with mild conditions. We believe that this should be of interest for
the theory of finite tensor categories as well. This problem is
dealt with in another paper \cite{qha3}.

\vskip 0.5cm

\noindent{\bf Acknowledgement:} The research was supported by the
National NSF of China under grant number 10601052. The author thanks
Gongxiang Liu and Yu Ye for useful discussions.


\begin{thebibliography}{99}

\bibitem{ass}
I. Assem, D. Simson, A. Skowronski, Elements of the Representation
Theory of Associative Algebras. Vol. 1. Techniques of Representation
Theory. London Mathematical Society Student Texts, 65. Cambridge
University Press, Cambridge, 2006.

\bibitem{ce}
D. Calaque, P. Etingof, Lectures on tensor categories,
math.QA/0401246.

\bibitem{chyz}
X.-W. Chen, H.-L. Huang, Y. Ye, P. Zhang, Monomial Hopf algebras, J.
Algebra 275 (2004) 212-232.

\bibitem{cr1}
C. Cibils, M. Rosso, Alg\`{e}bres des chemins quantiques, Adv. Math.
125 (1997) 171-199.

\bibitem{cr2}
C. Cibils, M. Rosso, Hopf quivers, J. Algebra 254 (2002) 241-251.

\bibitem{dpr}
R. Dijkgraaf, V. Pasquier, P. Roche, Quasi Hopf algebras, group
cohomology and orbifold models, in: Recent Advances in Field Theory,
Annecy-le-Vieux, 1990, Nucl. Phys. B Proc. Suppl. 18B (1991) 60-72.

%\bibitem{dw}
%R. Dijkgraaf, E. Witten, Topological gauge theories and group
%cohomology, Comm. Math. Phys. 129(2) (1990) 393-429.

\bibitem{d}
V.G. Drinfeld, Quasi-Hopf algebras, Leningrad Math. J. 1 (1990)
1419-1457.

\bibitem{eg1}
P. Etingof, S. Gelaki, Finite dimensional quasi-Hopf algebras with
radical of codimension 2, Math. Res. Lett. 11 (2004) 685-696.

\bibitem{eg2}
P. Etingof, S. Gelaki, On radically graded finite-dimensional
quasi-Hopf algebras, Mosc. Math. J. 5(2) (2005) 371-378.

\bibitem{eg3}
P. Etingof, S. Gelaki, Liftings of graded quasi-Hopf algebras with
radical of prime codimension, J. Pure Appl. Algebra 205(2) (2006)
310-322.

\bibitem{g}
S. Gelaki, Basic quasi-Hopf algebras of dimension $n^3,$ J. Pure
Appl. Algebra 198(1-3) (2005) 165-174.

\bibitem{hn}
F. Hausser, F. Nill, Integral theory for quasi-Hopf algebras,
math.QA/9904164.

\bibitem{qha1}
H.-L. Huang, Quiver approaches to quasi-Hopf algebras, J. Math.
Phys., to appear, arXiv:0902.1620.

\bibitem{qha3}
H.-L. Huang, G. Liu, Y. Ye, Quviers, quasi-quantum groups and finite
tensor categories, in preparation.

\bibitem{karp}
G. Karpilovsky, Projecive Representations of Finite Groups, Marcel
Dekker, New York, 1985.

\bibitem{kassel}
C. Kassel, Quantum Groups, Graduate Texts in Math. 155,
Springer-Verlag, New York, 1995.

\bibitem{ls}
R.G. Larson, M.E. Sweedler, An associative orthogonal bilinear form
for Hopf algebras, Amer. J. Math. 91 (1969) 149-166.

\bibitem{ll} G. Liu, F. Li, Pointed Hopf algebras of finite corepresentation type
and their classifications, Proceedings of AMS, 135(3) (2007)
649-657.

\bibitem{majid}
S. Majid, Foundations of Quantum Group Theory, Cambridge University
Press, Cambridge, 1995.

\bibitem{nichols}
W.D. Nichols, Bialgebras of type one, Communications in Algebra
6(15) (1978) 1521-1552.

\bibitem{rosso1}
M. Rosso, Groupes quantiques et alg\`{e}bres de battage quantiques,
C. R. Acad. Sci. Paris 320 (1995) 145-148.

\bibitem{rosso2}
M. Rosso, Quantum groups and quantum shuffles, Invent. Math. 133
(1998) 399-416.

\bibitem{sch1}
P. Schauenburg, Hopf modules and Yetter-Drinfeld modules, J. Algebra
169 (1994) 874-890.

\bibitem{sch2}
P. Schauenburg, Hopf modules and the double of a quasi-Hopf algebra,
Trans. Amer. Math. Soc. 354(8) (2002) 3349-3378.

\bibitem{ss}
S. Shnider, S. Sternberg, Quantum Groups: From Coalgebras to
Drinfeld Algebras, International Press Inc., Boston, 1993.

\bibitem{yetter}
D. Yetter, Quantum groups and representations of monoidal
categories, Math. Proc. Camb. Phil. Soc. 108 (1990) 261-290.

\end{thebibliography}
\end{document}